\documentclass[-ejs]{imsart}

\RequirePackage[OT1]{fontenc}
\RequirePackage{amsthm, amsmath, amsfonts, amssymb,epsf, graphicx}
\RequirePackage[square]{natbib}
\RequirePackage[colorlinks,citecolor=blue,urlcolor=blue]{hyperref}
\RequirePackage{hypernat}

\makeatletter \@addtoreset{equation}{section} \makeatother

\newcommand{\FF}{{\mathbb F}}
\newcommand{\HH}{{\mathbb H}}

\newcommand{\RR}{{\mathbb  R}}

\newcommand{\UU}{{\mathbb U}}

\newcommand{\GG}{{\mathbb G}}

\allowdisplaybreaks[1]



\newtheorem{thm}{Theorem}

\newtheorem{cor}{Corollary}


\setlength{\oddsidemargin}{0.0in}
\setlength{\evensidemargin}{0.0in}
\setlength{\textwidth}{6.5in}
\setlength{\topmargin}{0.0in}
\advance \topmargin by -\headheight
\advance \topmargin by -\headsep
\advance \topmargin .2in
\setlength{\textheight}{8.0in}
\sloppy \hyphenpenalty=10000
\begin{document}

\begin{frontmatter}
\title{Finite sampling inequalities:  an application to two-sample Kolmogorov-Smirnov statistics}
\runtitle{Finite sampling inequalities}

\begin{aug}
\author{Evan Greene} \thanksref{t1}\ead[label=e1]{egreene@uw.edu}
\and
\author{Jon A. Wellner,} \thanksref{t2}\ead[label=e2]{jaw@stat.washington.edu}



\thankstext{t1}{Supported by NSF Grant DMS-1104832}
\thankstext{t2}{Supported in part by NSF Grant DMS-1104832 and NI-AID grant R01 AI029168} 
\runauthor{Greene and Wellner}



\address{Department of Statistics, Box 354322\\University of Washington\\Seattle, WA  98195-4322}
\printead{e1}
\address{Department of Statistics, Box 354322\\University of Washington\\Seattle, WA  98195-4322}
\printead{e2}
\end{aug}

\begin{abstract}
We review a finite-sampling exponential bound due to Serfling and discuss
related exponential bounds for the hypergeometric distribution.   
We then discuss how such bounds motivate some new results for two-sample 
empirical processes.  Our development complements recent 
results by 
\cite{MR2887482} 
concerning exponential bounds for two-sided
Kolmogorov - Smirnov statistics by giving corresponding results for one-sided 
statistics with emphasis on ``adjusted'' inequalities of the type proved originally 
by \cite{MR0083864} 
and by 
\cite{MR1062069} 
for one-sample versions of these statistics.  
\end{abstract}

\begin{keyword}[class=AMS]
\kwd[Primary ]{62E17}
\kwd{62G30}
\kwd[; secondary ]{62G10}
\kwd{62D99}
\kwd{62E15}
\end{keyword}

\begin{keyword}
\kwd{Bennett inequality}
\kwd{finite sampling}
\kwd{Hoeffding inequality}
\kwd{hypergeometric distribution}
\kwd{two-samples}
\kwd{Kolmogorov-Smirnov statistics}
\kwd{exponential bounds}.

\end{keyword}

\end{frontmatter}


\bigskip
\section{Introduction:  Serfling's finite sampling exponential bound} 

Suppose that $\{ c_1, \ldots , c_N\}$ is a finite population with each $c_i \in \RR$.  
For $n \le N$, let $Y_1, \ldots , Y_n $ be a sample drawn from $\{c_1, \ldots , c_N\}$ without replacement;
we can regard the finite population $\{ c_1, \ldots , c_N \}$ as an urn containing $N$ balls labeled
with the numbers $c_1, \ldots , c_N$. 
Some notation:  we let
\begin{eqnarray*}
&& \mu_N = N^{-1} \sum_{i=1}^N c_i \equiv \overline{c}_N ,  \qquad  \sigma_N^2 = N^{-1} \sum_{i=1}^N (c_i - \overline{c}_N)^2,\\
&& a_N \equiv \min_{1 \le i \le N} c_i, \qquad b_N \equiv \max_{1 \le i \le N} c_i, \\
&& f_n \equiv \frac{n-1}{N-1}, \qquad \mbox{and} \qquad  f_n^* \equiv \frac{n-1}{N} .
\end{eqnarray*}
It is well-known (see e.g. \cite{Rice-2007}, Theorem B, page 208)
that $\overline{Y}_n = n^{-1} \sum_{i=1}^n Y_i$ satisfies
$E (\overline{Y}_n) = \mu_N$ and
\begin{eqnarray}
Var( \overline{Y}_n ) =  \frac{\sigma_N^2}{n} \left (1 - \frac{n-1}{N-1} \right ) = \frac{\sigma_N^2}{n} (1- f_n ) .
\label{FiniteSamplingVarianceOfMean}
\end{eqnarray}
\cite{MR0420967}, Corollary 1.1, 
shows that for all $\lambda > 0$
\begin{eqnarray}
P( \sqrt{n} (\overline{Y}_n - \mu_N) \ge \lambda ) \le \exp \left ( - \frac{2  \lambda ^2 }{ (1-f_n^*) (b_N-a_N)^2} \right ) .
\label{SerflingsExpBound}
\end{eqnarray}
This inequality is an inequality of the type proved by \cite{MR0144363} for sampling with replacement 
and more generally for sums of independent bounded random variables.
Comparing (\ref{FiniteSamplingVarianceOfMean}) and (\ref{SerflingsExpBound}), 
it seems reasonable to ask whether
 the factor $f_n^{*}$ in (\ref{SerflingsExpBound}) 
 can be improved to $f_n \equiv (n-1)/(N-1)$?  Indeed Serfling ends his paper (on page 47) with the remark:
 ``(it is) also of interest to obtain (\ref{SerflingsExpBound}) with the usual sampling fraction instead of $f_n^*$''.
Note that when $n=N$, $\overline{Y}_n = \mu_N$, and hence the probability in (\ref{SerflingsExpBound})
is $0$ for all $\lambda>0$, and the conjectured improvement of Serfling's bound agrees with this while
Serfling's bound itself is positive when $n=N$.

Despite related results  due to \cite{MR0345259, MR0345260, MR0345261}, 
 it seems that a definitive answer to this question is not 
yet known.    

A special case of considerable importance is the case when the numbers on the balls in the urn
are all $1$'s and $0$'s:  suppose that $c_1  = \cdots = c_D =1$, while $c_{D+1} , \ldots , c_N = 0$.
Then $X\equiv n\overline{Y}_n = \sum_{i=1}^n Y_i$ is well-known to have a 
Hypergeometric$(n,D,N)$ distribution
given by 
\begin{eqnarray*}
P\left ( \sum_{i=1}^n Y_i  = k \right ) = \frac{{D \choose k} {N-D \choose n-k}}{{N \choose n}},  \ \ \ 
\max \{ 0 ,  D+n-N \} \le k \le  \min\{ n, D \} .
\end{eqnarray*} 
In this special case $\mu_N = D/N$, $\sigma_N^2  = \mu_N (1-\mu_N)$, while $b_N=1$ and $a_N=0$.  
Thus Serfling's inequality (\ref{SerflingsExpBound}) becomes
\begin{eqnarray*}
P( \sqrt{n} ( \overline{Y}_n - \mu_N ) \ge \lambda ) \le 
\exp \left ( - \frac{2 \lambda^2}{1- f_n^*} \right ) \ \ \ \mbox{for all} \ \ \lambda >0 ,
\end{eqnarray*}
and the conjectured improvement is
\begin{eqnarray*}
P( \sqrt{n} ( \overline{Y}_n - \mu_N ) \ge \lambda ) \le 
\exp \left ( - \frac{2 \lambda^2}{1- f_n} \right ) \ \ \ \mbox{for all} \ \ \lambda >0 .
\end{eqnarray*}
Despite related results due to 
\cite{MR534946}  
and \cite{MR2206293}  
 it seems that a bound of the form in the last display remains unknown. 

We should note that an exponential bound of the Bennett type for the 
tails of the hypergeometric distribution does follow from results of 
\cite{MR681461}  
and
\cite{MR1093412};  see also \cite{MR1429082}.   
\smallskip

\begin{thm} 
\label{thm:EhmThm}
(Ehm, 1991)  
If $1 \le n \le D \wedge (N-D)$, then $\sum_{i=1}^n Y_i \stackrel{d}{=} \sum_{i=1}^n X_i$ 
where $X_i \sim \mbox{Bernoulli} (\pi_i)$, with $\pi_i \in (0,1)$, are independent.
\end{thm}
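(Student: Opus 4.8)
The plan is to pass to the probability generating function (pgf) of $X\equiv\sum_{i=1}^nY_i$ and to show that it factors into linear pgf's of Bernoulli type. Put $g(s)\equiv E(s^X)=\binom{N}{n}^{-1}\sum_k\binom Dk\binom{N-D}{n-k}s^k$. Under the hypothesis $1\le n\le D\wedge(N-D)$ one has $\max\{0,D+n-N\}=0$ and $\min\{n,D\}=n$, so $X$ is supported on exactly $\{0,1,\dots,n\}$ with every $P(X=k)>0$; hence $g$ is a polynomial of degree exactly $n$ with $g(0)=P(X=0)>0$. I claim the whole result reduces to showing that \emph{all $n$ roots of $g$ are real}. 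Indeed, a polynomial with positive coefficients has no root in $[0,\infty)$, so those roots would all be strictly negative, say $r_1,\dots,r_n<0$; setting $\pi_j\equiv(1-r_j)^{-1}\in(0,1)$ and $q_j\equiv1-\pi_j\in(0,1)$ one checks $q_j+\pi_j s=(s-r_j)/(1-r_j)$, and since $\prod_j(q_j+\pi_j s)$ and $g$ are polynomials of degree $n$ with the same roots that both take the value $1$ at the non-root $s=1$, they coincide. Then $g$ is the pgf of $\sum_{j=1}^nX_j$ with independent $X_j\sim\mathrm{Bernoulli}(\pi_j)$, $\pi_j\in(0,1)$, and uniqueness of pgf's gives $X\stackrel{d}{=}\sum_{j=1}^nX_j$.

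To prove real-rootedness of $g$ I would invoke the classical theory of Jacobi polynomials. Using the standard identity $\sum_k\binom Dk\binom{N-D}{n-k}s^k=\binom{N-D}{n}\,{}_2F_1(-n,-D;N-D-n+1;s)$ together with the Pfaff transformation ${}_2F_1(-n,b;c;s)=(1-s)^n\,{}_2F_1(-n,c-b;c;s/(s-1))$, one rewrites $g$ as a positive multiple of $(1-s)^n\,{}_2F_1\!\left(-n,N-n+1;N-D-n+1;s/(s-1)\right)$, and the terminating ${}_2F_1$ appearing here is a positive multiple of $P_n^{(\alpha,\beta)}$ evaluated at $x=(1+s)/(1-s)$, with $\alpha=N-D-n$ and $\beta=D-n$. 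The point is that the hypothesis $n\le D\wedge(N-D)$ is \emph{exactly} what forces $\alpha\ge0$ and $\beta\ge0$, hence $\alpha,\beta>-1$: the classical range in which $P_n^{(\alpha,\beta)}$ has $n$ distinct real zeros, all lying in $(-1,1)$. Since $g$ has degree $n$ and $g(1)=1\ne0$, its zeros are precisely the images of those Jacobi zeros under $x\mapsto(x-1)/(x+1)$, a bijection of $(-1,1)$ onto $(-\infty,0)$; so $g$ has $n$ real strictly negative roots, and in particular none escapes to $-\infty$ (consistent with $\deg g=n$) nor equals $0$ (consistent with $g(0)>0$), which is what places every $\pi_j$ in the \emph{open} interval $(0,1)$.

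The main obstacle is this real-rootedness step: pinning down the correct orthogonal-polynomial identity, or equivalently giving a direct argument that the Gauss polynomial ${}_2F_1(-n,-D;N-D-n+1;\cdot)$ has only real zeros in this parameter range. If one prefers to avoid Jacobi polynomials, an alternative is to begin from $\sum_k\binom Dk\binom{N-D}{n-k}s^k=[t^n](1+t)^{N-D}(1+st)^D$ — the bracketed polynomial in $t$ being real-rooted for $s\ge0$ — and to argue that the relevant coefficient extraction preserves real-rootedness, either through a P\'olya--Schur/stability argument or by an induction on $n$ using interlacing of zeros; in every version the combinatorial input is the same, namely that $n\le D\wedge(N-D)$ keeps all parameters in the favourable range. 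Everything else — the pgf factorization, matching of constants via $g(1)=1$, and the appeal to uniqueness of pgf's — is routine.
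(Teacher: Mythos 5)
Your argument is correct, but note that the paper does not prove this statement at all: it is quoted as a known result of Ehm (1991), who in turn relies on Vatutin and Mikha\u\i lov (1982), and the real-rootedness fact you isolate is exactly the content of those references (and of the Pitman (1997) paper the authors also cite, whose title is ``Probabilistic bounds on the coefficients of polynomials with only real zeros''). So you have supplied the classical proof that the literature the paper leans on actually carries out, rather than a new route. The reduction step is airtight as written: under $1\le n\le D\wedge(N-D)$ the support is exactly $\{0,\dots,n\}$ with all masses positive, so $g$ has degree $n$ and $g(0)>0$; real-rootedness then forces all roots into $(-\infty,0)$, and matching the constant via $g(1)=1$ gives the Bernoulli factorization with each $\pi_j=(1-r_j)^{-1}\in(0,1)$. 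Your Jacobi identification also checks out: with $a=-n$, $b=-D$, $c=N-D-n+1$ the Pfaff transformation produces ${}_2F_1(-n,N-n+1;N-D-n+1;s/(s-1))$, which is $P_n^{(\alpha,\beta)}$ at $x=(1+s)/(1-s)$ with $\alpha=N-D-n\ge0$, $\beta=D-n\ge0$, so the $n$ distinct Jacobi zeros in $(-1,1)$ map bijectively to $n$ strictly negative zeros of $g$, and the spurious factor $(1-s)^n$ contributes no zero since $g(1)=1\ne0$. The only thing I would add is an explicit citation for the ${}_2F_1$ representation of the Vandermonde convolution and for the zero location of Jacobi polynomials (Szeg\H{o}'s book suffices); with that, the proof is complete.
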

\smallskip

It follows from Theorem~\ref{thm:EhmThm} that 
\begin{eqnarray*}
&& n (D/N) = E \left ( \sum_1^n Y_i \right ) = E  \left ( \sum_1^n X_i \right ) = \sum_{i=1}^n \pi_i, \\
&& n \frac{D}{N} \left (1 - \frac{D}{N} \right ) (1 - f_n ) = Var \left ( \sum_1^n Y_i \right ) 
   = Var\left ( \sum_1^n X_i\right ) =  \sum_{i=1}^n \pi_i (1-\pi_i) .
\end{eqnarray*}
Furthermore, by applying Theorem~\ref{thm:EhmThm} together with Bennett's inequality 
(\cite{Bennett:62}; see also \cite{MR838963}, page 851), we obtain the following 
exponential bound for the tail of the hypergeometric distribution:
\smallskip

\begin{cor} 
If $1 \le n \le D \wedge (N-D)$, then for all $\lambda>0$ 
\begin{eqnarray*}
P( \sqrt{n} (\overline{Y}_n - \mu_N ) \ge \lambda) 
\le \exp \left ( - \frac{ \lambda^2}{2 \sigma_N^2 (1-f_n)} \psi 
        \left ( \frac{\lambda}{\sqrt{n} \sigma_N^2 (1-f_n)} \right ) \right) 
\end{eqnarray*}
where $\mu_N \equiv D/N$, $\sigma_N^2 \equiv \mu_N (1-\mu_N)$,  $1-f_n \equiv 1- (n-1)/(N-1)$ 
is the finite-sampling correction factor, and $\psi (y) \equiv 2 y^{-2} h(1+y) $ where $h(y) \equiv y(\log y -1) +1$.
\end{cor}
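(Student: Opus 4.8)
The plan is to obtain the corollary by combining Ehm's representation (Theorem~\ref{thm:EhmThm}) with the classical Bennett inequality for sums of independent bounded random variables, and then carrying out the bookkeeping needed to rewrite the resulting bound in terms of $\mu_N$, $\sigma_N^2$ and the finite-sampling correction $1-f_n$.

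First I would apply Theorem~\ref{thm:EhmThm}: since $1 \le n \le D \wedge (N-D)$, we may write $S_n := \sum_{i=1}^n Y_i \stackrel{d}{=} \sum_{i=1}^n X_i$ with $X_i \sim \mbox{Bernoulli}(\pi_i)$, $\pi_i \in (0,1)$, independent. From the two moment identities recorded immediately after Theorem~\ref{thm:EhmThm}, $\sum_{i=1}^n \pi_i = n\mu_N$ and
\begin{eqnarray*}
v \; := \; \sum_{i=1}^n \pi_i(1-\pi_i) \; = \; Var\Big(\sum_{i=1}^n X_i\Big) \; = \; n \sigma_N^2 (1-f_n).
\end{eqnarray*}
Each centered summand satisfies $X_i - \pi_i \le 1-\pi_i \le 1$ almost surely, and their total variance is $v$, so Bennett's inequality (\cite{Bennett:62}; see also \cite{MR838963}, page~851) gives, for every $s>0$,
\begin{eqnarray*}
P\Big( \sum_{i=1}^n (X_i - \pi_i) \ge s \Big) \; \le \; \exp\Big( - v\, h\big(1 + s/v\big) \Big),
\end{eqnarray*}
where $h(y) = y(\log y - 1) + 1$; note that $h(1+u) = (1+u)\log(1+u) - u$ is exactly the usual Bennett function, so no reconciliation of conventions is really needed beyond this identity.

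Next I would convert this into the stated form. Since $\psi(y) = 2 y^{-2} h(1+y)$, we have $v\, h(1+s/v) = \tfrac{s^2}{2v}\,\psi(s/v)$, so the exponent above is $-\tfrac{s^2}{2v}\psi(s/v)$. On the other hand $\sqrt{n}(\overline{Y}_n - \mu_N) = S_n/\sqrt{n} - \sqrt{n}\mu_N$, so
\begin{eqnarray*}
P\big( \sqrt{n}(\overline{Y}_n - \mu_N) \ge \lambda \big) \; = \; P\big( S_n - n\mu_N \ge \sqrt{n}\,\lambda \big) \; = \; P\Big( \sum_{i=1}^n (X_i - \pi_i) \ge \sqrt{n}\,\lambda \Big),
\end{eqnarray*}
the last step by Theorem~\ref{thm:EhmThm}. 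Taking $s = \sqrt{n}\,\lambda$ and substituting $v = n\sigma_N^2(1-f_n)$, then simplifying $\tfrac{(\sqrt{n}\lambda)^2}{2v} = \tfrac{\lambda^2}{2\sigma_N^2(1-f_n)}$ and $\tfrac{\sqrt{n}\lambda}{v} = \tfrac{\lambda}{\sqrt{n}\,\sigma_N^2(1-f_n)}$ produces exactly the claimed inequality.

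There is no genuine obstacle here; the argument is essentially a change of variables. The only points needing a little care are (i) invoking the version of Bennett's inequality phrased in terms of the total variance $v=\sum_i Var(X_i)$ of the \emph{centered} summands together with the almost-sure upper bound $X_i-\pi_i\le 1$ (the scale parameter equals $1$ because the population values lie in $\{0,1\}$), and (ii) the elementary identities $h(1+u)=(1+u)\log(1+u)-u$ and $v\,h(1+s/v)=\tfrac{s^2}{2v}\psi(s/v)$ linking the paper's normalization to the textbook statement. Both are routine verifications.
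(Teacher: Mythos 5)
Your argument is exactly the one the paper intends (the paper gives no details beyond citing Theorem~\ref{thm:EhmThm} and Bennett's inequality): Ehm's Bernoulli representation, total variance $v=n\sigma_N^2(1-f_n)$ from the recorded moment identities, Bennett with scale parameter $1$, and the change of variables $s=\sqrt{n}\lambda$ using $v\,h(1+s/v)=\tfrac{s^2}{2v}\psi(s/v)$. The computations check out, so this is a correct and complete write-up of the paper's own (unwritten) proof.
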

\smallskip

Since $\sigma_N^2 = \mu_N (1- \mu_N) \le 1/4$, the inequality of the corollary yields 
a further bound which is quite close to the conjectured Hoeffding type improvement of 
Serfling's bound, and which now has the desired finite-sampling correction factor $1-f_n$:
\smallskip

\begin{cor}
\begin{eqnarray*}
P( \sqrt{n} ( \overline{Y}_n - \mu_N ) \ge \lambda) 
& \le & \exp \left (- \  \frac{2 \lambda^2}{(1-f_n)} \psi \left ( \frac{\lambda}{\sqrt{n} \sigma_N^2 (1-f_n)} \right ) \right ) \\
& \le & \exp \left (- \  \frac{2 \lambda^2}{(1-f_n)} \psi \left ( \frac{1}{\sigma_N^2 (1-f_n)} \right ) \right ) .
\end{eqnarray*}
\end{cor}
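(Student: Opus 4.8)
The plan is to derive both displayed inequalities by elementary monotonicity arguments from the bound of the preceding corollary, using only two facts about the function $\psi(y)=2y^{-2}h(1+y)$ with $h(y)=y(\log y-1)+1$: first, $\psi\ge 0$ on $(-1,\infty)$, since there $h\ge 0$ (indeed $h$ attains its minimum $0$ at $1$); and second, $\psi$ is non-increasing on $(0,\infty)$.

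\smallskip
\noindent\emph{First inequality.} Here I would use only $\sigma_N^2=\mu_N(1-\mu_N)\le 1/4$, which gives $1/(2\sigma_N^2)\ge 2$ and hence, for every $\lambda>0$,
\[
\frac{\lambda^2}{2\sigma_N^2(1-f_n)}\ \ge\ \frac{2\lambda^2}{1-f_n}.
\]
The argument $\lambda/(\sqrt{n}\,\sigma_N^2(1-f_n))$ of $\psi$ in the preceding corollary is unchanged, so since $\psi\ge 0$ and $t\mapsto e^{-t}$ is decreasing, the bound of that corollary is at most the first member of the asserted chain.

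\smallskip
\noindent\emph{Second inequality.} For this I would use that $\psi$ is non-increasing on $(0,\infty)$, which follows from a short computation: writing $g(y)=h(1+y)=(1+y)\log(1+y)-y$, one finds
\[
\tfrac12\,y^3\psi'(y)=y g'(y)-2g(y)=2y-(2+y)\log(1+y)=:\phi(y),
\]
and $\phi(0)=\phi'(0)=0$ with $\phi''(y)=-y/(1+y)^2<0$ for $y>0$, so $\phi<0$ and therefore $\psi'<0$ on $(0,\infty)$. The two arguments of $\psi$ appearing in the chain differ only by the factor $\lambda/\sqrt{n}$, and this factor is at most $1$ whenever the left-hand probability is positive: since $\overline{Y}_n\le 1$ and $\mu_N\ge 0$, a positive probability forces $\lambda\le\sqrt{n}(1-\mu_N)\le\sqrt{n}$. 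Hence $\lambda/(\sqrt{n}\,\sigma_N^2(1-f_n))\le 1/(\sigma_N^2(1-f_n))$, so by monotonicity the value of $\psi$ at the former point is at least its value at the latter, and the monotonicity of $e^{-t}$ again yields the second inequality; if the probability vanishes, the inequality is trivial.

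\smallskip
The computations are all routine, so the only point I would flag is the last one: read literally as an inequality between the two \emph{bounds} themselves, the chain is valid only for $\lambda\le\sqrt{n}$ — for larger $\lambda$ the decreasing function $\psi$ is evaluated at the \emph{larger} argument in the first member, which reverses the ordering of the two exponentials — but this is harmless, because for such $\lambda$ the probability being bounded already equals $0$.
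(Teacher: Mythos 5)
Your proof is correct and is essentially the argument the paper intends: the first inequality from $\sigma_N^2=\mu_N(1-\mu_N)\le 1/4$ together with $\psi\ge 0$, and the second from the monotone decrease of $\psi$ on $(0,\infty)$ combined with the fact that the probability vanishes unless $\lambda\le\sqrt{n}$. Your closing caveat --- that the literal ordering of the two exponential bounds reverses for $\lambda>\sqrt{n}$, where the probability is already $0$ --- is a fair and accurate reading of the statement.
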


By considerations related to the work of 
\cite{MR1258865}  
and 
\cite{MR1973309},   
the first author of this paper has succeeded in proving the following 
exponential bound.
\smallskip

\begin{thm} 
\label{thm:Greene-W-HyperGeomBound} 
(Greene, 2014)  
Suppose that  $\sum_{i=1}^n Y_i \sim \mbox{Hypergeometric} (n, D, N)$.  
Define $\mu_N = D/N$ and suppose $N>4$ and $2 \le n < D \le N/2$.  Then for all $0 < \lambda < \sqrt{n}/2$ we have 
\begin{eqnarray*}
\lefteqn{P\left ( \sqrt{n} ( \overline{Y}_n - \mu_N) \ge \lambda \right )} \\ 
& \le & \sqrt{\frac{1}{2\pi \lambda^2}} \left ( \frac{1}{2} \right ) 
                \sqrt{ \left ( \frac{N-n}{N} \right ) \left ( \frac{\sqrt{n} + 2\lambda}{\sqrt{n} - 2 \lambda }\right ) 
                         \left ( \frac{N- n + 2 \sqrt{n} \lambda}{N-n - 2 \sqrt{n} \lambda }\right ) } \\
&& \ \ \cdot \exp \left ( - \frac{2}{1-\frac{n}{N}} \lambda^2 \right ) 
              \exp \left ( - \frac{1}{3} \left ( 1 + \frac{n^3}{(N-n)^3} \right ) \frac{\lambda^4}{n} \right ) .
\end{eqnarray*}
\end{thm}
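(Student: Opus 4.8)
The plan is a Bahadur--Rao--type refinement: isolate the largest relevant atom of the Hypergeometric$(n,D,N)$ law, control the rest of the tail by a geometric series via log-concavity, and estimate the atom by Stirling's formula with explicit remainders. Write $p_k=\binom{D}{k}\binom{N-D}{n-k}/\binom{N}{n}$ and note that $\{\sqrt n(\overline{Y}_n-\mu_N)\ge\lambda\}=\{\sum_1^n Y_i\ge k_0\}$ with $k_0:=\lceil nD/N+\sqrt n\lambda\rceil$. Under $2\le n<D\le N/2$ and $0<\lambda<\sqrt n/2$ — using $\mu_N=D/N\le1/2$ to turn $\lambda<\sqrt n/2$ into $nD/N+\sqrt n\lambda<n\wedge D$ — the factors $D-k_0$ and $n-k_0$ are nonnegative, $k_0+1$ and $N-D-n+k_0+1$ are strictly positive, and $k_0$ exceeds the mean $nD/N$. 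I would first record elementary two-sided bounds on these four quantities in terms of $\sqrt n\pm2\lambda$ and $N-n\pm2\sqrt n\lambda$; this is precisely where $\lambda<\sqrt n/2$ is consumed, since $\sqrt n-2\lambda$ and $N-n-2\sqrt n\lambda$ must remain positive.

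Log-concavity of the hypergeometric mass function makes the ratios $\rho_k:=p_{k+1}/p_k=(D-k)(n-k)/\{(k+1)(N-D-n+k+1)\}$ nonincreasing, so $\rho_{k_0}<1$ and
\[
P\bigl(\sqrt n(\overline{Y}_n-\mu_N)\ge\lambda\bigr)=\sum_{k\ge k_0}p_k\le p_{k_0}\sum_{j\ge0}\rho_{k_0}^{\,j}=\frac{p_{k_0}}{1-\rho_{k_0}}.
\]
For the geometric factor I would compute $1-\rho_{k_0}$ exactly and bound it below using the inequalities of the previous paragraph; combined with the Stirling normalisation $p_{k_0}\lesssim(2\pi\,\mathrm{Var})^{-1/2}\times(\text{exponential})$, where $\mathrm{Var}=n\sigma_N^2(1-f_n)$, this produces — after $\sigma_N^2=\mu_N(1-\mu_N)\le1/4$ — the leading prefactor $\tfrac12\sqrt{(N-n)/N}\,(2\pi\lambda^2)^{-1/2}$, with the two square-root correction factors $\sqrt{(\sqrt n+2\lambda)/(\sqrt n-2\lambda)}$ and $\sqrt{(N-n+2\sqrt n\lambda)/(N-n-2\sqrt n\lambda)}$ arising from the exact, rather than asymptotic, forms of these two estimates.

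The crux is the exponential part of $p_{k_0}$. I would expand each of the three binomial coefficients by Stirling's formula with explicit Robbins remainders, organising the exponents through the function $h(y)=y(\log y-1)+1$ appearing above (expansion $h(1+u)=\tfrac12u^2-\tfrac16u^3+\tfrac1{12}u^4-\cdots$) and centring at $k_0/D$, $(n-k_0)/(N-D)$ and $n/N$. A Pinsker-type lower bound on the quadratic contributions gives the Gaussian exponent $-2\lambda^2/(1-n/N)$, while the cubic and quartic contributions, together with the $O(1/\cdot)$ Robbins corrections, must be shown to add — with the correct sign, uniformly over $0<\lambda<\sqrt n/2$ — at most $-\tfrac13\bigl(1+n^3/(N-n)^3\bigr)\lambda^4/n$, the factor $n^3/(N-n)^3$ coming from the piece built on $N-D\ge N/2$ and the $\tfrac13$ from the combined Taylor coefficients.

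Getting these higher-order Stirling terms to point the right way, uniformly over $0<\lambda<\sqrt n/2$ where the centring perturbations $u$ need not be small, is the main obstacle, and is where refined normal-approximation ideas in the spirit of \cite{MR1258865} and \cite{MR1973309} enter; a subsidiary point is that the discreteness shift $k_0-(nD/N+\sqrt n\lambda)<1$ must be absorbed into the Gaussianly decaying atom estimate, which is immediate for small $\lambda$ since the $1/\lambda$ prefactor then dominates. Collecting the three pieces — the geometric factor, the prefactor of the atom, and its exponential part — and tidying the constants then yields the stated inequality.
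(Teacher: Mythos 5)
First, a point of reference: this paper does not prove Theorem~\ref{thm:Greene-W-HyperGeomBound}; the text immediately following it defers the proof to \cite{Greene-Wellner:2015} and to \cite{Greene:2016}, so there is no in-paper argument to measure you against. Judged on its own terms, your outline has the right architecture for a bound carrying a $1/\lambda$ prefactor: isolate the atom $p_{k_0}$ at $k_0=\lceil nD/N+\sqrt n\,\lambda\rceil$, dominate the tail by $p_{k_0}/(1-\rho_{k_0})$ via log-concavity of the hypergeometric mass function, and estimate $p_{k_0}$ by Stirling with explicit remainders. Your preliminary checks (positivity of $\sqrt n-2\lambda$ and $N-n-2\sqrt n\,\lambda$, the bound $k_0\le n\wedge D$, and $\rho_{k_0}<1$) all go through. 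But the proposal stops exactly where the theorem lives. The entire content of the result is the pair of exact constants in the exponent, $2/(1-n/N)$ and $\tfrac13\bigl(1+n^3/(N-n)^3\bigr)$, valid uniformly up to $\lambda=\sqrt n/2$, where the centring perturbations in the Stirling expansion are of order one and Taylor remainders cannot be discarded. You write that the cubic and quartic contributions ``must be shown'' to combine with the right sign and you name this as ``the main obstacle'' without resolving it; restating the target inequality for the exponential part of $p_{k_0}$ is not a proof of it.

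There is also a concrete structural gap you do not address: the stated bound is independent of $D$, and its correction factors are exactly those of the balanced case $D=N/2$. Indeed, with $k=n/2+\sqrt n\,\lambda$ one has $k/(n-k)=(\sqrt n+2\lambda)/(\sqrt n-2\lambda)$ and $\bigl(N/2-(n-k)\bigr)/\bigl(N/2-k\bigr)=(N-n+2\sqrt n\,\lambda)/(N-n-2\sqrt n\,\lambda)$, so the two square-root factors are precisely the Stirling prefactors of $\binom{N/2}{k}\binom{N/2}{n-k}\big/\binom{N}{n}$ at the shifted mean. For general $D<N/2$ the atom $p_{k_0}$ has a different prefactor, built from $k_0(D-k_0)$ and $(n-k_0)(N-D-n+k_0)$, and the mean $nD/N$ sits elsewhere; your plan to centre at $k_0/D$, $(n-k_0)/(N-D)$ and $n/N$ and then invoke $\sigma_N^2\le 1/4$ at the end does not explain how the $D$-dependence of either the prefactor or the exponent is dominated by the corresponding $D=N/2$ expressions. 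What is needed --- and what the citation of \cite{MR1973309} in the surrounding text is signalling --- is an extremal or monotonicity-in-$D$ step reducing the problem to the balanced case (or, via Theorem~\ref{thm:EhmThm}, to an extremal sum of independent Bernoulli variables), after which the Stirling computation you describe is carried out on a single explicit family. Without that reduction, and without explicit uniform control of the third- and fourth-order terms, what you have is a plausible strategy rather than a proof.
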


The proof of this bound, along with a complete analogue for the 
hypergeometric distribution of a bound of Talagrand (1994) for the binomial distribution,
appears in \cite{Greene-Wellner:2015} and in the forthcoming Ph.D. thesis of the first author, 
\cite{Greene:2016}.

The bound given in Theorem~\ref{thm:Greene-W-HyperGeomBound}
involves a still better finite-sampling correction factor, namely
$1- \overline{f}_n = 1- n/N$, which  
has also appeared in \cite{MR856817} in the context of a Bayesian analysis of
finite sampling.  Note that as $N\rightarrow \infty$, the above bound yields 
\begin{eqnarray*}
\lefteqn{\limsup_{N\rightarrow \infty} P\left ( \sqrt{n} ( \overline{Y}_n - \mu_N) \ge \lambda \right )} \\ 
& \le & \sqrt{\frac{1}{2\pi \lambda^2}} \left ( \frac{1}{2} \right ) 
                \sqrt{  \left ( \frac{\sqrt{n} + 2\lambda}{\sqrt{n} - 2 \lambda }\right ) } 
\cdot \exp \left ( - 2 \lambda^2 - \frac{\lambda^4}{3n} \right )  ,
\end{eqnarray*}
a bound which improves slightly on the bound given by \cite{MR1973309}
in the case of sums of i.i.d. Bernoulli random variables.  

Before leaving this section we begin to make a connection to finite-sampling empirical 
distributions:  
Now let $\FF_n (t) = n^{-1} \sum_{i=1}^n 1_{(-\infty , t]} (Y_i ) $ and $F_N (t) = N^{-1} \sum_{i=1}^N 1_{(-\infty, t]} (c_i ) $.
Then it is easily seen that Serfling's bound yields
\begin{eqnarray*}
P( \sqrt{n} ( \FF_n (t) - F_N (t) ) \ge \lambda ) \le \exp \left ( - \frac{2 \lambda^2}{(1 - (n-1)/N)} \right ) 
\end{eqnarray*}
for each fixed $\lambda >0$ and $t \in \RR$.   Note that since $\FF_n (t)$ is equal in distribution to 
 the sample mean 
of $n$ draws without replacement from an urn containing $N F_N (t)$ $1$'s and $N(1-F_N (t))$ $0$'s, the 
bound in the last display only involves the hypergeometric special case of Serfling's inequality.
This leads to the following conjecture concerning bounds for the finite sampling empirical process 
$\{ \sqrt{n} (\FF_n (t) - F_N (t) ) : \ t \in \RR \}$:
\medskip

\par\noindent
{\bf Conjecture:}  There exist constants $C, D>0$ (possibly $C=1$ and $D=2$?) such that 
\begin{eqnarray}
&& P\left ( \sqrt{n} \sup_{t} ( \FF_n (t) - F_N (t) ) \ge \lambda \right ) 
           \le C \exp \left ( - \frac{2 \lambda^2}{(1 - f_n)} \right ) , 
           \label{ConjectureInequalKS-FSOneSided}\\
&& P\left ( \sqrt{n} \sup_{t} | \FF_n (t) - F_N (t) | \ge \lambda \right ) 
          \le D \exp \left ( - \frac{2 \lambda^2}{(1 - f_n)} \right ) 
           \label{ConjectureInequalKS-FSTwoSided}
\end{eqnarray}
for all $\lambda>0$.   The possibility that $D=2$ is suggested by 
the corresponding inequality established by 
\cite{MR1062069} 
in the case of sampling with replacement.  

With these strong indications of the plausibility of an improvement of Serfling's bound 
and corresponding improvements in exponential bounds for the uniform-norm deviations
of the finite-sampling empirical process,
we can now turn to an application of the basic idea 
in the context of two-sample Kolmogorov-Smirnov statistics.

\section{Two-sample tests and  finite-sampling connections}
\hfill \\
To connect this with the two-sample Kolmogorov-Smirnov statistics, suppose that 
$X_1, \ldots , X_m$ are i.i.d. $F$ and $Y_1, \ldots , Y_n$ are i.i.d. $G$.  Let $N=m+n$.  
Then for testing 
$H_c : F=G$ with $F$ continuous versus $K^+  : \ F \ge G$ ($F \prec_s G$), 
$K^{-} : \ G \ge F$, ($G \prec_s F$),  
or $K:  \ F \not= G$, the classical 
K-S test statistics are 
\begin{eqnarray*}
&& D_{m,n}^+ \equiv \sqrt{\frac{mn}{N}} \sup_x ( \FF_m (x) - \GG_n (x)) , \\
&& D_{m,n}^- \equiv \sqrt{\frac{mn}{N}} \sup_x ( \GG_n (x) - \FF_m (x)) , \ \ \mbox{and}\\
&& D_{m,n} \equiv \sqrt{\frac{mn}{N}} \sup_x | \FF_m (x) - \GG_n (x) | ,
\end{eqnarray*}
respectively.  It is well-known that under $H_c$ we have
\begin{eqnarray*}
D_{m,n}^{\pm}  \rightarrow_d \sup_{0 \le t \le 1} \UU (t) , \qquad 
D_{m,n} \rightarrow_d \sup_{0 \le t \le 1} | \UU (t) | 
\end{eqnarray*}
if $m \wedge n \rightarrow \infty$ where $\UU$ is a standard Brownian bridge process
on $[0,1]$; see e.g. 
\cite{MR0229351}, 
pages 189-190,  
\cite{MR0097136}, and 
\cite{MR1385671},  
pages 360-366.    

Note that with $\lambda_N \equiv m/N$ and 
$$
\HH_N \equiv \lambda_N \FF_m + (1-\lambda_N) \GG_n = N^{-1} \sum_{i=1}^N 1_{(-\infty, \cdot ]} (Z_{(i)} )
$$
where $Z_{(1)} \le \cdots \le Z_{(N)} $ are the order statistics of the pooled sample,
we have 
\begin{eqnarray*}
\FF_m - \HH_N 
& = & \FF_m - \lambda_N \FF_m - (1-\lambda_N) \GG_n 
= (1-\lambda_N) (\FF_m - \GG_n ), \qquad \mbox{and}\\
\GG_n - \HH_N 
& = & \GG_n - \lambda_N \FF_m - (1-\lambda_N) \GG_n = \lambda_N ( \GG_n - \FF_m ),
\end{eqnarray*}
and hence, with $\overline{\lambda}_N = 1- \lambda_N$,
\begin{eqnarray*}
&& \sqrt{\frac{mn}{N}} ( \FF_m - \GG_n ) 
 =  \sqrt{N} \sqrt{\lambda_N \overline{\lambda}_N} \frac{1}{\overline{\lambda}_N} (\FF_m - \HH_N) 
 =  \frac{1}{\sqrt{\overline{\lambda}_N}} \sqrt{m} ( \FF_m - \HH_N), \\
&& \sqrt{\frac{mn}{N}} ( \GG_n - \FF_m ) 
 =  \sqrt{N} \sqrt{\lambda_N \overline{\lambda}_N} \frac{1}{\lambda_N} (\GG_n - \HH_N) 
 =  \frac{1}{\sqrt{\lambda_N}} \sqrt{n} ( \GG_n - \HH_N) .
\end{eqnarray*}
Thus, using the independence of the ranks $\underline{R}$ and the order statistics $\underline{Z}$ 
\begin{eqnarray*}
P( D^+_{m,n} \ge t ) 
& = & E_Z P_R \left ( \sqrt{m} \| ( \FF_m - \HH_N )^{+} \|_{\infty} > t \sqrt{1-\lambda_N} \right ) 
\end{eqnarray*}
and it would follow from (\ref{ConjectureInequalKS-FSOneSided}) that 
\begin{eqnarray}
P( D_{m,n}^+ \ge t ) 
& \le  & C \exp \left ( - 2 \overline{\lambda}_N t^2 / (1 - f_m) \right ) \nonumber \\
& \le & C \exp \left ( - 2 (n/N) t^2 / (n/(N-1)) \right )  \nonumber \\
& = & C \exp \left ( - 2 \frac{N-1}{N} t^2 \right )   \label{TwoSampleKS-OneSidedProbBoundsConj}
\end{eqnarray}
for all $t>0$.  Similarly it would also follow from (\ref{ConjectureInequalKS-FSOneSided}) that 
\begin{eqnarray*}
P( D_{m,n}^- \ge t ) 
& \le  & C \exp \left ( - 2 \lambda_N t^2 / (1 - f_n ) \right )  \\
& \le & C \exp \left ( - 2 (m/N) t^2 / (m/(N-1)) \right ) = C \exp \left ( - 2 \frac{N-1}{N} t^2 \right )
\end{eqnarray*}
for all $t>0$.  
Combining the two one-sided inequalities yields a (conjectured) two-sided inequality:  
\begin{eqnarray*}
P( D_{m,n} \ge t ) 
& \equiv & P( \sqrt{mn/N} \| \FF_m - \GG_n \|_{\infty} > t ) \\
& \le & P( D_{m,n}^+ > t ) + P( D_{m,n}^{-} > t ) \\
& \le & 2C \exp \left ( - 2 \frac{N-1}{N} t^2 \right ) .
\end{eqnarray*}

In the next section we will prove that bounds of this type with $C=1$ and $D=2$ hold 
in the special case $m=n$.
For some results for the two-side two-sample Kolmogorov-Smirnov statistic in the case $m=n$ and
computational results for $m \not= n$, see 
\cite{MR2887482}.  These authors were aiming for a bound of the 
form $ C \exp (-2 t^2)$ both for $m=n$ and $m\not= n$.  
The above heuristics seem to suggest that a bound of the form
$C \exp ( - 2((N-1)/N) t^2)$ might be a natural goal.

\section{An exponential bound for $D_{m,n}^+$ when $m=n$}
\hfill \\
Throughout this section we suppose that the null hypothesis 
$H_c$ holds:  $G=F$ is a continuous distribution function.

From \cite{MR0097136}, (2.3) on page 473 (together with $t = \sqrt{mn/N} d$ and $d = a/n$ from page 473, line 4), 
when $m=n$ (so $N=2n$),
\begin{eqnarray*}
P( D_{n,n}^+ \ge t ) 
& = & P\left ( \sqrt{\frac{mn}{N}} \sup_x ( \FF_m (x) - \GG_n (x) ) \ge  \sqrt{\frac{mn}{N}} \frac{a}{n} \right ) \\
& = & P\left ( \sqrt{\frac{n^2}{2n}} \sup_x ( \FF_n (x) - \GG_n (x) ) \ge  \sqrt{\frac{n^2}{2n}} \frac{a}{n} \right ) \\
& = & \frac{{2n \choose n-a}}{{2n \choose n}}  \ \ \mbox{for} \ \ a = 1, 2, \ldots , n .
\end{eqnarray*}
We first compare the exact probability from the last display with the possible upper bounds
\begin{eqnarray*}
&& PB_2 (n) = \exp \left ( - 2 \frac{2n-1}{2n} \frac{a^2}{2n} \right ) ; \\
&& PB_3(n) = \exp \left ( - 2 \frac{a^2}{2n} \right ) .
\end{eqnarray*}
For $n=3$ we find that 
\begin{eqnarray*}
\begin{array}{| r | r  r  r   r  |} \hline
                                      a        &  0 & 1         &  2        & 3    \\ \hline 
                                   E(xact)   & 1 & .75       & 0.3      & 0.05 \\  \hline
                                    PB2    & 1 & 0.7574 & 0.3291 & 0.0821  \\
                                    PB2-E & 0 & 0.0074 & 0.0291 & 0.0321 \\ \hline
                                    PB3    & 1 &  0.7165 & 0.2636 & 0.0498 \\
                                    PB3-E  & 0 & -0.0335 & -0.0364 & - 0.0002 \\ \hline
\end{array}
\end{eqnarray*}
Further comparisons for $m=n = 10, 12, 13, 14, 15, 25$  support the validity 
of the bound involving the finite sampling fraction $f_n$.  
These comparisons agree with the following theorem:
\smallskip

\begin{thm} 
\label{BasicGW-TwoSampleKS-bounds}
A. \ When $m=n$ (so that $N=2n$) the second bound in 
(\ref{TwoSampleKS-OneSidedProbBoundsConj}) holds for all $n \ge 1 $  with $C=1$:
\begin{eqnarray}
P( D_{n,n}^+ \ge t) 
& = & P\left ( \sqrt{\frac{mn}{N}} \sup_x ( \FF_m (x) - \GG_n (x) ) \ge t \right ) \\
& \le & \exp \left ( - 2 \frac{N-1}{N} t^2 \right )  \ \ \ \mbox{for all} \ \ t>0 .               
\label{TwoSampleKS-OneSidedProbBoundsLittleMequalsLittleN}
\end{eqnarray}
Equivalently, when $m=n$,
\begin{eqnarray}
P\left ( \sqrt{\frac{mn}{N}} \sqrt{\frac{N-1}{N}} \sup_x ( \FF_m (x) - \GG_n (x) ) \ge t \right ) 
 \le  \exp \left ( - 2 t^2 \right )               
\label{TwoSampleKS-OneSidedProbBoundsLittleMequalsLittleNAlternateForm}
\end{eqnarray}
for all  $ t>0$.  In fact (via a much easier proof!) \\
\begin{eqnarray}
P ( D_{n,n}^+ \ge t ) \le  \exp \left ( - 2 \frac{N}{N+1} t^2 \right )  \ \ \ \mbox{for all} \ \ t>0 .
\label{TwoSampleKS-OneSidedProbBoundLittleMequalsLittleNLutzDuembgenForm}
\end{eqnarray}
B. \ On the other hand, when $m=n$ (so that $N=2n$), for all $n\ge 1$ we have
\begin{eqnarray*}
P( D_{n,n}^+ \ge t) > \exp ( - 2 t^2 ) \ \ \ \mbox{for all} \ \ 0 < t < 1.
\end{eqnarray*}
\end{thm}

\begin{proof}[\bf Proof]
A.  \ Since the inequality holds trivially for $a=0$, and can be shown easily by numerical 
computation for $a \in \{ 1, 2, 3 \}$ (see the Table above), 
it suffices to show that 
\begin{eqnarray*}
 \frac{{2n \choose n-a}}{{2n \choose n}}  \le \exp \left ( - 2 \frac{2n-1}{2n} \frac{a^2}{2n} \right ) 
\end{eqnarray*}
for $a \in \{ 1, \ldots , n \}$ and $n\ge 4$.  Furthermore, we will show that it holds for $a=n$ 
in a separate argument, and thus it suffices to show that it holds for $a \in \{ 1, \ldots , n-1 \}$ and $n\ge 4$.
By rewriting the numerator and denominator on the left side of the last display, the desired inequality 
can be rewritten as 
\begin{eqnarray*}
\frac{n! n!}{(n-a)! (n+a)!} \le \exp \left ( - \frac{2n-1}{2n} \cdot \frac{a^2}{n} \right ) .
\end{eqnarray*}
By taking logarithms we can rewrite this as 
\begin{eqnarray}
\log \left ( \frac{n! n!}{(n-a)!(n+a)!} \right ) + \frac{2n-1}{2n} \frac{a^2}{n} \le 0 .
\label{BasicLogExpression}
\end{eqnarray}
Now by Stirling's formula with bounds (see e.g. \cite{MR0117359})  
we have
\begin{eqnarray}
\phantom{blabl}\sqrt{2\pi k} \left (\frac{k}{e} \right )^k \exp \left ( \frac{1}{12k} - \frac{1}{360 k^3} \right ) 
 \le k! \le \sqrt{2\pi k} \left (\frac{k}{e} \right )^k \exp \left ( \frac{1}{12k}  \right ) .
 \label{StirlingsFormula}
 \end{eqnarray}
Using these bounds in (\ref{BasicLogExpression}) we find that the left side is 
bounded above by 
\begin{eqnarray*}
\lefteqn{ 
- n \left \{ (1- \frac{a}{n} ) \log (1- \frac{a}{n} ) + (1 + \frac{a}{n} ) \log (1 + \frac{a}{n} ) \right \} } \\
&& \ \ - \  \frac{1}{2} \left ( \log \left (1 - \frac{a}{n} \right ) + \log \left (1 + \frac{a}{n} \right ) \right ) \\
&& \ \ + \  \left \{ \frac{1}{6n} - \frac{1}{12 (n-a)} - \frac{1}{12(n+a)}
                + \frac{1}{360} \left ( \frac{1}{(n-a)^3} + \frac{1}{(n+a)^3} \right ) \right \} \\
&& \ \ + \ \frac{a^2}{n} - \frac{a^2}{2 n^2} \\
& \equiv & I_1 + I_2 + I_3 + I_4 .
\end{eqnarray*}
Note that $I_1$ and $I_2$ are as defined in \cite{MR2887482} page 640, while $I_3$ and $I_4 $ differ.
From \cite{MR2887482} page 640, 
\begin{eqnarray}
I_1 \le  - \frac{a^2}{n} - \frac{a^4}{6n^3} - \frac{a^6}{15 n^5} - \frac{a^8}{28 n^7} ,
\label{IOneBound}
\end{eqnarray}
(which is proved by Taylor expansion of $(1+x) \log (1+x) + (1-x) \log (1-x)$ about $x=0$), and 
\begin{eqnarray}
I_2 \le \frac{a^2}{2n^2} + \frac{a^4}{4 n^4} + \frac{a^6}{6 n^6 (1 - a^2/n^2)} .
\label{ITwoBound}
\end{eqnarray}
Note that the lead term in the bound (\ref{IOneBound}) for $I_1$ and lead term of $I_4$ cancel each other,
while the first term of the bound (\ref{ITwoBound}) for $I_2$ cancels the second term of $I_4$. 
Adding the bounds yields 
\begin{eqnarray*}
\lefteqn{I_1 + I_2 + I_3 + I_4 }\\
& \le & - \frac{a^4}{12 n^3}- \frac{a^4}{12 n^3} - \frac{a^6}{15 n^5} - \frac{a^8}{28 n^7} \\
&& \ \ + \ \frac{a^4}{4 n^4} + \frac{a^6}{6n^6 (1- a^2/n^2)} + I_3 \\
& = & - \frac{a^4}{n^3} \left ( \frac{1}{12} - \frac{1}{4n} \right )- \frac{a^4}{12 n^3}
            - \frac{a^6}{n^5} \left ( \frac{1}{15} - \frac{1}{6n (1 - a^2/n^2)} \right ) - \frac{a^8}{28 n^7} 
             \  + \ I_3 \\ 
& \le & - \frac{a^4}{n^3} \left ( \frac{1}{12} - \frac{1}{4n} \right ) - \frac{a^4}{12 n^3}
             - \frac{a^6}{n^5} \left ( \frac{1}{15} - \frac{1}{6 ( 2 - 1/n)} \right ) - \frac{a^8}{28 n^7}  + I_3\\
& \le & - \frac{a^4}{n^3} \left ( \frac{1}{12} - \frac{1}{4n} \right ) - \frac{a^4}{12 n^3}
             + \frac{3 a^6}{105 n^5}  - \frac{a^8}{28 n^7}  + I_3\\
& = & - \frac{a^4}{n^3} \left ( \frac{1}{12} - \frac{1}{4n} \right ) 
          - \frac{a^4}{12 n^3}\left(1-  \frac{36 a^2}{105 n^2}\right)  - \frac{a^8}{28 n^7}  + I_3\\
& \leq & - \frac{a^4}{n^3} \left ( \frac{1}{12} - \frac{1}{4n} \right ) - \frac{a^4}{21 n^3}  - \frac{a^8}{28 n^7}  + I_3\\
& \equiv & R_{12} + I_3 .
\end{eqnarray*}
Now $R_{12} \le 0 $ for $n\ge 4$ and $I_3 \le 0$ for all $n\ge 2$ and $a \in \{ 1, \ldots , n-1\}$
by the following argument:
\begin{eqnarray*}
I_3 & = & \frac{1}{6n} - \frac{1}{12 (n-a)} - \frac{1}{12(n+a)} + \frac{1}{360} \left ( \frac{1}{(n+a)^3} + \frac{1}{(n-a)^3} \right ) \\
& = & - \frac{1}{6}  \frac{a^2}{n (n^2 - a^2)}  + \frac{2}{360} \frac{n (n^2 + 3a^2)}{ (n^2 - a^2)^3} \\
& = & - \frac{1}{6 n (n^2-a^2)} \left \{  a^2  -  \frac{2}{60} \frac{n^2 (n^2+3a^2)}{(n^2-a^2)^2}  \right \} \\
& = & - \frac{1}{6 n (n^2 - a^2)} \left \{ a^2 - \frac{1}{30} \frac{n^2 (n^2 - a^2 + 4a^2)}{(n^2 -a ^2)^2} \right \} \\
& = & -\frac{1}{6 n (n^2 - a^2)} \left \{  a^2 \left ( 1 - \frac{2}{15} \frac{n^2}{(n^2 - a^2)^2}  \right ) 
              - \frac{n^2 (n^2-a^2)}{30 (n^2 - a^2)^2} \right \} \\
& \le & -\frac{1}{6 n (n^2 - a^2)} \left \{  a^2 \left ( 1 - \frac{2}{15} \frac{1}{3}  \right ) 
              - \frac{n^2 }{30 (n^2 - a^2)} \right \} \\
&& \ \ \ \mbox{by using  } \ a \le n-1, \ \mbox{so} \ \ n^2 - a^2 \ge n^2 - (n-1)^2 = (2n-1) , \\
&& \ \ \ \mbox{and } \ n^2 /(2n-1)^2 \le 1/3 \ \ \mbox{for} \ \ n \ge 4, \\
& = & -\frac{1}{6 n (n^2 - a^2)} \left \{  a^2 \left ( 1 - \frac{2}{3\cdot 15}   \right ) 
              - \frac{n^2- a^2 + a^2  }{30 (n^2 - a^2)} \right \} \\
& = &  -\frac{1}{6 n (n^2 - a^2)} \left \{  a^2 \left ( 1 - \frac{2}{3\cdot 15}  - \frac{1}{30 (n^2-a^2)}   \right ) 
              - \frac{1}{30} \right \} \\
& \le & -\frac{1}{6 n (n^2 - a^2)} \left \{  a^2 \left ( 1 - \frac{2}{3\cdot 15}  - \frac{1}{30 (2n-1)}   \right ) 
              - \frac{1}{30} \right \} \\
& \le & -\frac{1}{6 n (n^2 - a^2)} \left \{  a^2 \left ( 1 - \frac{31}{630}   \right ) 
              - \frac{1}{30} \right \} \\
\end{eqnarray*}
for $n\ge 4$.  This is a decreasing function of $a$ for fixed $n$, and hence to show that it is $<0$ it suffices to 
check it for $a=1$.  But when $a=1$ the right side above equals 
\begin{eqnarray*}
\lefteqn{- \frac{1}{6 n(n^2-1^2)}  \left \{ 1 - \frac{31}{630} - \frac{1}{30}  \right \} }\\
 &  = &  - \frac{1}{ n(n^2-1)}  \left \{ \frac{289}{6\cdot 315} \right \}  
  <  - \frac{1}{n(n^2-1)}   \left \{ \frac{280}{6\cdot 315} \right \} 
 =  - \frac{4}{27 n(n^2-1)} <0,
\end{eqnarray*}
so we conclude that $I_3 <0$ for $a \in \{ 1, \ldots , n-1 \}$ and $n\ge 4$.  
It remains only to show that the desired bound holds for $a=n$; that is we have
\begin{eqnarray*}
\frac{1}{{2n\choose n}} \le \exp (- (n-1/2) ) .
\end{eqnarray*}
But this  can easily be shown via the Stirling formula bounds (\ref{StirlingsFormula}).

Thus 
\begin{eqnarray*}
\exp ( I_1 + I_2 + I_3 ) \le \exp ( - I_4 ) = \exp \left ( - \frac{2n-1}{2n} \frac{a^2}{n} \right ),
\end{eqnarray*}
and the claimed inequality holds for all $n\ge 4$. Since the bounds hold for $n=1,2,3$ by 
direct numerical computation, the claim follows.

Here is the simple proof of (\ref{TwoSampleKS-OneSidedProbBoundLittleMequalsLittleNLutzDuembgenForm})
due to Lutz D\"umbgen:  For integers $a \in \{ 1, \ldots , n \}$
\begin{eqnarray*}
\log \left ( {2n \choose n+a } \big /  {2n \choose n}  \right )
& = & \log \frac{(n!)^2}{(n-a)!(n+a)!} = \log \prod_{i=1}^a \frac{n+1-i}{n+i} \\
& = & \sum_{i=1}^a \log \frac{n+1-i}{n+i} = \sum_{i=1}^a \log \frac{n+1/2 -(i-1/2)}{n+1/2 +(i-1/2)} \\
& = & \sum_{i=1}^a \log \frac{1 - (i-1/2)/(n+1/2)}{1 + (i-1/2)/(n+1/2)} < - 2 \sum_{i=1}^a \frac{i-1/2}{n+1/2} \\
& = & - 2 \frac{a (a+1)/2 - a/2}{n+1/2} = - \frac{a^2}{n+1/2} = -2 \frac{N}{N+1} \frac{a^2}{2n} \\
& = & - 2 \frac{N}{N+1} t^2 .
\end{eqnarray*}
Here the inequality follows from 
\begin{eqnarray*}
\log \frac{1-x}{1+x} = - 2 \sum_{k=0}^\infty \frac{x^{2k+1}}{2k+1} < -2x \ \ \mbox{for} \ \ 0 < x < 1 .
\end{eqnarray*}
Note that $N/(N+1) \ge (N-1)/N$.  
\medskip 

B. \  We first define
\begin{eqnarray*}
r_n (a) & \equiv & \log \left \{ \frac{ {2n \choose n-a} / {2n \choose n} }{ \exp (- 2 a^2/(2n))}  \right \} \\
& = & \log {2n \choose n-a} - \log {2n \choose n}  + \frac{a^2}{n} .
\end{eqnarray*}
Since we can take $t = a/\sqrt{2n}$, 
it suffices to show that $r_n (a) >0$ for $1 \le a \le \lfloor \sqrt{2n} \rfloor$.
We will first show this for $n \ge 31$.  Then the proof will be completed by checking the inequality 
numerically for $1 \le a \le \lfloor \sqrt{2n} \rfloor$ and $ n \in \{ 1, \ldots , 30 \}$.

By using the Stirling formula bounds of (\ref{StirlingsFormula})
as in the proof of A, but now with upper bounds replaced by lower bounds, we find that
\begin{eqnarray*}
r_n (a) & = & 2 \log (n!) - \log (n-a)! - \log (n+a)! + \frac{a^2}{n} \\
& \ge & - n \left \{ \left (1 - \frac{a}{n} \right ) \log  \left (1 - \frac{a}{n} \right )  
           +  \left (1 + \frac{a}{n} \right ) \log  \left (1 + \frac{a}{n} \right ) \right \} \\
&& \ \ \ - \ \frac{1}{2} \left \{ \log  \left (1 - \frac{a}{n} \right ) + \log  \left (1 + \frac{a}{n} \right ) \right \} \\
&& \ \ \ + \ \frac{1}{6n} - \frac{1}{180 n^3} - \frac{1}{12 (n-a)} - \frac{1}{12 (n+a)} \\
&& \ \ \ + \ \frac{a^2}{n} \\
& \equiv & L_1 + L_2 + L_3 + L_4 .
\end{eqnarray*}
As in (\ref{IOneBound}) and (\ref{ITwoBound}) and the displays following them
we find that 
\begin{eqnarray*}
L_1 & \ge & - n \left \{ \frac{a^2}{n^2} + \frac{a^4}{6 n^4} + \frac{a^6}{15 n^6} 
                    + \frac{a^8}{28n^8} \left ( \frac{n^2}{ n^2 - a^2} \right )\right \}, \\
L_2 & \ge & \frac{a^2}{2n^2} + \frac{a^4}{4 n^4} + \frac{a^6}{6n^6}, \\
L_3 & = & - \frac{a^2}{6 n (n^2 - a^2)} - \frac{1}{180 n^3} , \\
L_4 & = & \frac{a^2}{n} .
\end{eqnarray*}
Putting these pieces together and rearranging we find that
\begin{align}
r_n (a) \ge &\left[\frac{31 a^2}{64 n^2}+\frac{a^4}{4n^4}+\frac{a^6}{6n^6}
                      -\frac{a^4}{6n^3}-\frac{a^6}{15n^5}-\frac{a^8}{28n^5}\left( \frac{1}{n^2-a^2}\right)\right]\nonumber \\
&+\left[\frac{a^2}{64n^2} +\frac{1}{6 n} - \frac{1}{180 n^3} -\frac{1}{12(n+a)}-\frac{1}{12(n-a)}\right] \label{second_exp}\\
&=: K_1+K_2> 0 \label{third_exp}
\end{align}
will prove the claim. Note in \eqref{second_exp} that the $a^2/n$ term 
cancelled by virtue of the lower bound estimate based on the Taylor
expansion of $(1+x)\log(1+x)+(1-x)\log(1-x)$. First note that
\begin{eqnarray*}
K_2 
& = & 
\frac{a^2}{64n^2}+\frac{1}{6 n} - \frac{1}{180 n^3} -\frac{1}{12(n+a)}-\frac{1}{12(n-a)}\\
& = &
\frac{a^2[28n^3-45 a^2 n]+a^2[16 n^3-480n^2]+[a^2 n^3+16 a^2-16 n^2]}{2880 n^3 (n-a) (n+a)}
\end{eqnarray*}
The denominator of the right-hand-side is clearly positive for 
$a\in\left\{1,2,\dots,\lfloor\sqrt{2n}\rfloor\right\}$. By inspection, we can see the term 
$a^2 n^3+16a^2-16n^2$  
in the numerator is increasing in $a$. Picking $a=1$, we then see 
$n^3+16-16n^2 > 0$ for $n \geq 31$, and thus $a^2 n^3+16a^2-16n^2>0$ for all admissible
$a$.  
Next, the polynomial $28 n^3-45 a^2 n$ is decreasing in the admissible $a$. 
For any fixed $n$, the minimum value it can attain is then larger than
$28 n^3- 90n^2$. For $n\geq 31$, this quantity is positive. 
Therefore,  $28 n^3-45 a^2 n > 0$ for all admissible $a$ when $n \geq 31$. Finally, 
note that $16n^3-480n^2 = 16 n^2 (n - 30) > 0$ for $n\ge 31$.
Hence we have shown
$K_2 > 0$.

We next have
\begin{eqnarray}
K_1 
& = & \left[\frac{31a^2}{64n^2}-\frac{a^4}{6n^3}\right] 
        +  \left[\frac{a^4}{4n^4}-\frac{a^6}{15n^5}\right]
        +   \left[\frac{a^6}{6n^6}-\frac{a^8}{28n^5}\left( \frac{1}{n^2-a^2}\right)\right]  \nonumber \\
&= & \left[\left(\frac{a^2}{192 n^3}\right)(93  n-32 a^2)\right] 
        + \left[\left(\frac{a^4}{60 n^5}\right)(15n-4 a^2)\right] \nonumber\\
       && \qquad  \ \ \  + \ 
       \left[ \left(\frac{a^6}{84 n^6 \left(n^2-a^2\right)}\right)(14  n^2-3 a^2 n-14 a^2)
        \right]
\nonumber \\
&\equiv &  \left[\left(\alpha\right)(93  n-32 a^2)\right]
                   + \left[\left(\beta\right)(15n-4 a^2)\right] \nonumber \\
&& \ \ \ + \  \left[   \left(\gamma \right)(14  n^2-3 a^2 n-14 a^2)\right]\ .
\label{I1_EXP}
\end{eqnarray}
Again since $a \in \left\{1,\dots,\lfloor\sqrt{2n}\rfloor\right\}$, it is clear that $\alpha,\beta,$ and 
$\gamma$ in \eqref{I1_EXP} are positive for all admissible choices of $a$.
Hence, the sign of each bracketed term will be dictated by the remaining polynomial in $a$. 
It is also clear from their form that each polynomial is decreasing in $a$; hence we need
only evaluate at the endpoints to determine positivity. But 
$93n-32(\sqrt{2n})^2 = 29n>0$, $15n-4(\sqrt{2n})^2 = 15n-8n=7n>0$, 
and $14  n^2-3 (\sqrt{2n})^2 n-14 (\sqrt{2n})^2=14n^2-6n^2-28n=4n(2n-7)>0$
with the final inequality following as $n \geq 31$. Hence all terms in \eqref{I1_EXP} are positive and so $K_1 > 0$. 
Together with  $K_2 > 0$ as proved above, the claim is proved for $n\ge 31$.

Since the bound holds for $a \in \{ 1, \ldots , \lfloor \sqrt{2n} \rfloor \}$ and $n \in \{ 1, \ldots , 30 \}$ by 
direct numerical computation, the claim follows. 
\end{proof}
 
\section{Some comparisons and connections}

\subsection{Comparisons:  two-sided tail bounds}
Here we compare and contrast our results with those of \cite{MR2887482}.  As in 
\cite{MR2887482} (see also \cite{Wei-Dudley:11}), we say that {\sl the DKW inequality holds for given $m,n$ and $C$} if 
\begin{eqnarray*}
P( D_{m,n} \ge t ) \le C \exp ( - 2 t^2 )  \ \ \ \mbox{for all} \ \ t>0,
\end{eqnarray*}
and we say that {\sl the DKWM inequality holds for given $m,n$} if the inequality in the 
last display holds with $C=2$.  \cite{MR2887482} prove the following theorem:
\smallskip

\begin{thm} 
\label{WeiDudleyMainThm} 
(Wei and Dudley, 2012)\ \ 
For $m=n$ in the two sample case:\\
(a) The DKW inequality always holds with $C = e \dot= 2.71828 $.\\
(b) For $m=n\ge 4$, the smallest $n$ such that $H_c$ can be rejected at level $0.05$, 
the DKW inequality holds with $C=2.16863$.  \\
(c) The DKWM inequality holds for all $m=n \ge 458$. \\
(d) For each $m=n < 458$, the DKWM inequality fails for some $t$ of the form $t=  k/\sqrt{2n}$.\\
(e) For each $m=n< 458$, the DKW inequality holds for $C=2(1+\delta_n)$ for some $\delta_n>0$ 
where, for $12\le n \le 457$, 
$$
\delta_n < - \frac{0.07}{n} + \frac{40}{n^2} - \frac{400}{n^3} .
$$
\end{thm}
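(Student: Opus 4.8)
\medskip
\noindent\textbf{Proof plan.}\ \ The plan is to reduce all five parts to the exact finite-sample distribution of $D_{n,n}$ in the balanced case and then split the work between Stirling-type estimates and explicit finite verifications. Since $N=2n$, both $D_{n,n}^{+}$ and $D_{n,n}$ take values only on the grid $\{a/\sqrt{2n}:a=0,1,\dots,n\}$, and $t\mapsto e^{-2t^2}$ is strictly decreasing, so within each interval $((a-1)/\sqrt{2n},\,a/\sqrt{2n}]$ the right endpoint is the worst case and it suffices to check every asserted inequality at $t=a/\sqrt{2n}$, where $2t^2=a^2/n$. There one has the Gnedenko--Korolyuk and Hodges identities (\cite{MR0097136}, \cite{MR0229351})
\[
P\bigl(D_{n,n}^{+}\ge a/\sqrt{2n}\bigr)=\frac{\binom{2n}{n-a}}{\binom{2n}{n}},\qquad
P\bigl(D_{n,n}\ge a/\sqrt{2n}\bigr)=\frac{2}{\binom{2n}{n}}\sum_{j\ge 1}(-1)^{j+1}\binom{2n}{n-ja},
\]
the second sum being finite. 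Writing $\rho_n(a):=P(D_{n,n}\ge a/\sqrt{2n})\,e^{a^2/n}$ and $M_n:=\max_{1\le a\le n}\rho_n(a)$, the five claims become: (a) $M_n\le e$ for all $n$, with $M_1=e$; (b) $M_n\le 2.16863$ for $n\ge 4$, the constant being $M_4=\tfrac{8}{35}e^{9/4}$; (c) $M_n<2$ for $n\ge 458$; (d) $M_n>2$ for $n\le 457$; (e) an explicit majorant $M_n-2\le 2\delta_n$ for $12\le n\le 457$.

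The analytic engine I would use is the Stirling-plus-Taylor calculation already carried out in the proof of Theorem~\ref{BasicGW-TwoSampleKS-bounds}. A first, crude bound is $\rho_n(a)\le 2\binom{2n}{n-a}\big/\binom{2n}{n}\,e^{a^2/n}$; combining it with Theorem~\ref{BasicGW-TwoSampleKS-bounds}A gives $\rho_n(a)\le 2\,e^{a^2/(2n^2)}\le 2\sqrt{e}$ always, and $\le e$ as soon as $a\le n\sqrt{2(1-\log 2)}\approx 0.78\,n$. For the complementary ``large $a$'' range I would apply the Stirling bounds (\ref{StirlingsFormula}) directly to $\binom{2n}{n-a}/\binom{2n}{n}$: with $x=a/n$, the quantity $(1-x)\log(1-x)+(1+x)\log(1+x)-x^2$ is positive and increasing, hence bounded below by a fixed constant when $x$ is bounded away from $0$, so $\binom{2n}{n-a}/\binom{2n}{n}\,e^{a^2/n}\to 0$ there and the factor $2$ is absorbed for every $n$ above an explicit threshold; the finitely many remaining $(n,a)$ are checked by computation. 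This proves (a), and $C=e$ is optimal because $P(D_{1,1}\ge t)\equiv 1$ on $0<t\le 1/\sqrt2$, which forces $C\ge e$. For (b) one deletes $n\in\{1,2,3\}$ --- exactly the sample sizes for which $H_c$ cannot be rejected at level $0.05$ --- but now the crude $2P^{+}$-bound is genuinely insufficient: at $(n,a)=(4,2)$ it equals $\tfrac{0.8}{e^{-1}}\approx 2.1746>2.16863$, whereas the true two-sided probability gives only $\approx 2.097$. So here one must keep at least the subtracted term $\binom{2n}{n-2a}$, and, to obtain an \emph{upper} bound, an odd number of terms of the alternating sum (which lies between consecutive partial sums). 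With $\rho_n(a)$ sandwiched between $\frac{2e^{a^2/n}}{\binom{2n}{n}}\bigl[\binom{2n}{n-a}-\binom{2n}{n-2a}\bigr]$ and $\frac{2e^{a^2/n}}{\binom{2n}{n}}\bigl[\binom{2n}{n-a}-\binom{2n}{n-2a}+\binom{2n}{n-3a}\bigr]$, Stirling shows $M_n$ is eventually decreasing and $\le M_4=\tfrac{8}{35}e^{9/4}=2.16863\ldots$, with a finite numerical sweep over small $n\ge 5$ confirming that the maximum over $n\ge 4$ is attained at $(n,a)=(4,3)$. Part (c) is the same computation with the target $2$ in place of $2.16863$: the analytic bound disposes of all large $n$ and a numerical sweep over $4\le n\le 457$ on the grid identifies the last value with $M_n\ge 2$ at $n=457$, giving the threshold $458$.

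For (d) I would, for each $n\le 457$, exhibit one admissible $a$ with $\rho_n(a)>2$. The cases $n\le 3$ are immediate ($M_1=e$, $M_2=e^2/3$, $M_3=\tfrac35 e^{4/3}$, all $>2$); for $4\le n\le 457$ the witness is \emph{not} $a=1$ (since $\rho_n(1)=e^{1/n}<2$ for $n\ge 2$) but an intermediate value, where the lower sandwich bound $\rho_n(a)\ge\frac{2e^{a^2/n}}{\binom{2n}{n}}\bigl[\binom{2n}{n-a}-\binom{2n}{n-2a}\bigr]$ together with the reverse Stirling inequalities certifies $\rho_n(a)>2$; as there are only finitely many $n$ this is a finite verification, conveniently organized by near-monotonicity of $M_n$ near the threshold. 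For (e), in the range $12\le n\le 457$ I would start from the odd-truncation upper bound, apply the forward Stirling bounds (\ref{StirlingsFormula}) and the Taylor expansion of $(1+x)\log(1+x)+(1-x)\log(1-x)$ exactly as in (\ref{IOneBound})--(\ref{ITwoBound}), to obtain $\rho_n(a)\le 2+g_n(a/n)$ with $g_n$ an explicit function, maximize $g_n$ over the admissible range of $x=a/n$, and majorize the resulting function of $n$ by the stated cubic $-0.07/n+40/n^2-400/n^3$; the numerical constants fall out of the Stirling remainders and the rounding in that maximization, and $n\ge 12$ is the range in which the coarser remainder estimates become valid.

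The step I expect to be the main obstacle is obtaining control of the alternating Gnedenko--Korolyuk sum that is simultaneously uniform in $n$ and in $a$ and sharp enough to pin down the exact constants $2.16863$, the threshold $458$, and the cubic majorant for $\delta_n$. The one-term bound $2P^{+}$ loses too much near the maximizing configuration --- it already overshoots at $(n,a)=(4,2)$ --- so one is forced to carry the second (and sometimes third) term $\binom{2n}{n-2a}$, which pushes the analysis into the regime where $2a$ is comparable with $n$ and the Stirling remainder terms are no longer uniformly negligible; there the bounds of Theorem~\ref{BasicGW-TwoSampleKS-bounds}A, which were tailored to the one-sided/one-term situation, no longer suffice and must be redone term by term. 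Converting the resulting large-$n$ asymptotics of $M_n$ into a clean finite checklist --- the $n\ge 458$, and the auxiliary $n\ge 31$ and $n\ge 12$ splits --- with a residual numerical window small enough to evaluate in practice, is where the real work lies; each individual Stirling/Taylor estimate is elementary.
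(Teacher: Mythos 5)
This theorem appears in the paper only as a quotation of Wei and Dudley (2012): the authors write ``\cite{MR2887482} prove the following theorem'' and supply no proof of their own, so there is no internal argument to compare yours against. Judged against the cited source, your plan is a faithful reconstruction of how the result is actually established: reduction to the lattice $t=a/\sqrt{2n}$, the Gnedenko--Korolyuk/Hodges exact formula expressing $P(D_{n,n}\ge a/\sqrt{2n})$ as an alternating sum of binomial ratios, Stirling--Taylor control of $\binom{2n}{n-ja}\,e^{a^2/n}\big/\binom{2n}{n}$ for large $n$, and exhaustive computation over the residual finite window. Your numerical anchors are correct: $M_1=e$, $M_2=e^2/3$, $M_3=\tfrac35 e^{4/3}$, and the constant in (b) is indeed $2.16863=\tfrac{8}{35}e^{9/4}=P\bigl(D_{4,4}\ge 3/\sqrt{8}\bigr)e^{9/4}$, attained at $(n,a)=(4,3)$. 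You are also right that the one-term bound $2P(D_{m,n}^{+}\ge t)$ cannot deliver (b)--(e), since it already exceeds $2.16863$ at $(n,a)=(4,2)$, forcing retention of the $\binom{2n}{n-2a}$ term.

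The caveat is that what you have written is a plan, not a proof. Part (a) is the only part you come close to closing (the split at $a\approx 0.78n$ using Theorem~\ref{BasicGW-TwoSampleKS-bounds}A on one side and the positivity of $(1-x)\log(1-x)+(1+x)\log(1+x)-x^2$ on the other is sound, modulo the unspecified threshold $n_0$ and finite check below it). Everything that makes (b)--(e) quantitatively specific --- the threshold $458$, a witness $a$ for each $n\le 457$ in (d), and the cubic majorant of $\delta_n$ in (e) --- is delegated to ``a numerical sweep'' or to Stirling estimates that are described but not executed. That is on the order of $10^5$ lattice evaluations plus a two- or three-term Stirling analysis in the regime where $2a$ and $3a$ are comparable to $n$, where the remainder terms are not uniformly small; this is precisely the part of Wei and Dudley's argument that is computer-assisted, and you correctly identify it as the main obstacle without resolving it. So the outline is the right one and consistent with the cited source, but as submitted it establishes essentially none of (b)--(e) and only sketches (a).
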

\smallskip

For comparison, the following theorem follows from Theorem~\ref{BasicGW-TwoSampleKS-bounds}. 
We say that {\sl the modified DKWM inequality holds for given $m,n$} if 
\begin{eqnarray*}
P( D_{m,n} \ge t ) \le  2\exp \left ( - 2 \left ( \frac{N-1}{N} \right ) t^2 \right )  \ \ \ \mbox{for all} \ \ t>0,
\end{eqnarray*}
\smallskip

\begin{thm} 
\label{thm:TwoSampleKS-ourBoundSummary}
For $m=n$ in the two sample case:\\
(a)  For all $n\ge1$ the modified DKWM inequality holds.  \\
(b)  Alternatively,  for the modified Kolmogorov statistic given by 
$$
D_{m,n}^{mod} \equiv \sqrt{\frac{N-1}{N}} \sqrt{\frac{mn}{N}} \| \FF_m - \GG_n \|_{\infty} ,
$$
the DKWM inequality holds for all $n\ge1$.
\end{thm}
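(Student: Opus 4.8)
The plan is to obtain Theorem~\ref{thm:TwoSampleKS-ourBoundSummary} as a direct corollary of part A of Theorem~\ref{BasicGW-TwoSampleKS-bounds} together with a one-line symmetry argument. First I would observe that under $H_c$, with $F=G$ continuous, the two one-sided statistics $D_{n,n}^+$ and $D_{n,n}^-$ have the same distribution: when $m=n$, interchanging the labels of the two samples $X_1,\dots,X_n$ and $Y_1,\dots,Y_n$ is a legitimate operation that swaps $\FF_m$ and $\GG_n$, hence swaps $D_{n,n}^+$ and $D_{n,n}^-$, while leaving the joint law of the pooled sample (and therefore of everything in sight) unchanged. Consequently the bound $P(D_{n,n}^+\ge t)\le \exp(-2((N-1)/N)t^2)$ of \eqref{TwoSampleKS-OneSidedProbBoundsLittleMequalsLittleN} holds verbatim with $D_{n,n}^+$ replaced by $D_{n,n}^-$, for all $t>0$ and all $n\ge1$.

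For part (a), I would then use the elementary inclusion $\{D_{n,n}\ge t\}\subseteq\{D_{n,n}^+\ge t\}\cup\{D_{n,n}^-\ge t\}$, which follows from the identity $|\FF_m-\GG_n|=\max\{(\FF_m-\GG_n)^+,(\GG_n-\FF_m)^+\}$ and the fact that the supremum of a pointwise maximum equals the maximum of the two suprema. A union bound then gives
\[
P(D_{n,n}\ge t)\le P(D_{n,n}^+\ge t)+P(D_{n,n}^-\ge t)\le 2\exp\!\left(-2\,\frac{N-1}{N}\,t^2\right)
\]
for all $t>0$ and all $n\ge1$, which is precisely the modified DKWM inequality.

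For part (b), I would simply note that $D_{m,n}^{mod}=\sqrt{(N-1)/N}\,D_{m,n}$, so that $P(D_{m,n}^{mod}\ge t)=P\big(D_{m,n}\ge t\sqrt{N/(N-1)}\big)$; substituting this into the inequality of part (a) and simplifying the exponent (since $2\,\frac{N-1}{N}\cdot t^2\,\frac{N}{N-1}=2t^2$) yields $P(D_{m,n}^{mod}\ge t)\le 2\exp(-2t^2)$ for all $t>0$, i.e.\ the DKWM inequality for the rescaled statistic.

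There is essentially no hard step here, since Theorem~\ref{BasicGW-TwoSampleKS-bounds} does all the work; the only point deserving a word of care is the symmetry claim, which uses both $m=n$ and the continuity of $F$ under $H_c$ (so that ties among the pooled observations have probability zero and the relabeling argument is exact). Everything else is a union bound and an algebraic rescaling of the deviation level.
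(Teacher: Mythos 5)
Your argument is correct and is essentially the route the paper takes: Theorem~\ref{thm:TwoSampleKS-ourBoundSummary} is obtained from Theorem~\ref{BasicGW-TwoSampleKS-bounds}-A by noting that $D_{n,n}^+$ and $D_{n,n}^-$ are equidistributed under $H_c$ when $m=n$, applying the union bound exactly as in the display at the end of Section 2, and rescaling for part (b). No gaps.
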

\smallskip

We are not claiming that our ``modified'' version of the DKWM inequality improves on the results of 
\cite{MR2887482}:  it is clearly worse for $m=n> 458$.    On the other hand, it may provide a useful
clue to the formulation of DKWM type exponential bounds for two-sample Kolmogorov statistics 
when $m \not= n$.  In this direction we have the following conjecture:
\smallskip

\par\noindent
{\bf Conjecture:}   For any $m\not= n$, 
\begin{eqnarray}
&& P \left ( D_{m,n}^+ > t \right ) \le \exp \left ( - 2 \left ( \frac{N-1}{N} \right ) t^2 \right ) \ \ \mbox{for all} \ \ t>0 
\label{OneSidedModifiedDKWMGeneralMandN}\\
&& P \left ( D_{m,n} > t \right ) \le 2 \exp \left ( - 2 \left ( \frac{N-1}{N} \right ) t^2 \right ) \ \ \mbox{for all} \ \ t>0 .
\label{TwoSidedModifiedDKWMGeneralMandN}
\end{eqnarray}

That is, we conjecture that the modified DKWM inequality holds for all $m,n \ge1$.  
This is supported by all the numerical experiments we have conducted so far.  

\subsection{Comparisons:  one-sided tail bounds}

\cite{MR2887482}    
do not treat bounds for the one-sided statistics.  
Here we summarize our results with a theorem which parallels their 
Theorem~\ref{WeiDudleyMainThm} above.
In analogy with their terminology, we say that
{\sl the one-sided DKW inequality holds for given $m,n$ and $C$} if 
\begin{eqnarray*}
P( D_{m,n}^+ \ge t ) \le C \exp ( - 2 t^2 )  \ \ \ \mbox{for all} \ \ t>0,
\end{eqnarray*}
and we say that {\sl the one-sided DKWM inequality holds for given $m,n$} if the inequality in the 
last display holds with $C=1$.     
Moreover, we say that {\sl the modified one-sided DKWM inequality holds for given $m,n$} if 
\begin{eqnarray*}
P( D_{m,n}^+ \ge t ) \le  \exp \left ( - 2 \left ( \frac{N-1}{N} \right ) t^2 \right )  \ \ \ \mbox{for all} \ \ t>0.
\end{eqnarray*}

\smallskip
\begin{thm}
\label{thm:OneSideTwoSampleKS-bound}
For $m=n$ in the two sample case:\\
(a) The one-sided DKW inequality holds for all $n\ge 1$ with $C = e/2 \dot= 2.71828 /2 $ \\ 
$\phantom{blab}= 1.35914 $.  For this range of $n$, $C=e/2$ is sharp since equality occurs \\
$\phantom{blab}$for $n=1$ and
$t=1/\sqrt{2}$ (or $a = t \sqrt{2n} = 1$).\\
(b) For $m=n\ge 5$, 
the one-sided DKW inequality
holds with $C=2.16863/2=$\\
$\phantom{blab}1.084315$.  \\
(c) The one-sided DKWM inequality fails for all $m=n \ge 1$. \\
(d) The modified one-sided DKWM inequality holds for all $m=n\ge1$.
\end{thm}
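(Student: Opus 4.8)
The plan is to read parts (c) and (d) directly off Theorem~\ref{BasicGW-TwoSampleKS-bounds}, and to reduce parts (a) and (b) to the same type of Stirling estimate used in that theorem's proof together with a finite numerical check. Parts (d) and (c) take essentially no work. With $m=n$ we have $N=2n$, so the assertion of (d), namely $P(D_{n,n}^{+}\ge t)\le\exp(-2(N-1)t^{2}/N)$ for all $t>0$, is precisely part~A of Theorem~\ref{BasicGW-TwoSampleKS-bounds}; and part~B of that theorem, $P(D_{n,n}^{+}\ge t)>\exp(-2t^{2})$ for $0<t<1$, shows that the unmodified one-sided DKWM inequality (the case $C=1$) fails for every $m=n\ge1$, which is (c); explicitly $P(D_{n,n}^{+}\ge 1/\sqrt{2n})=\binom{2n}{n-1}/\binom{2n}{n}=n/(n+1)>e^{-1/n}$.

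For (a) and (b), recall from \cite{MR0097136} that $P(D_{n,n}^{+}\ge a/\sqrt{2n})=q_{n}(a):=\binom{2n}{n-a}/\binom{2n}{n}$ for $a\in\{1,\dots,n\}$, and that $D_{n,n}^{+}$ takes only the values $a/\sqrt{2n}$ with $a=0,1,\dots,n$. Thus $t\mapsto P(D_{n,n}^{+}\ge t)$ is constant on each interval $\big((a-1)/\sqrt{2n},\,a/\sqrt{2n}\big]$ while $e^{-2t^{2}}$ is decreasing there, so the one-sided DKW inequality with constant $C$ holds for $m=n$ if and only if
\[
M(n)\;:=\;\max_{1\le a\le n}\,e^{a^{2}/n}\,q_{n}(a)\;\le\;C .
\]
Hence (a) is the assertion $M(n)\le e/2$ for all $n\ge1$, and it is sharp because $M(1)=e\cdot\binom{2}{0}/\binom{2}{1}=e/2$ (the column $a=1$, i.e.\ $t=1/\sqrt2$); and (b) is the assertion $M(n)\le1.084315$ for $n\ge5$, where the threshold is genuine since $M(4)=e\binom{8}{2}/\binom{8}{4}=2e/5\doteq1.0873>1.084315$ (attained at $a=2$), so one-sided DKW with $C=1.084315$ really does fail at $n=4$.

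To bound $M(n)$ I would rerun the argument in the proof of part~A of Theorem~\ref{BasicGW-TwoSampleKS-bounds}. Since $\frac{2n-1}{2n}\cdot\frac{a^{2}}{n}=\frac{a^{2}}{n}-\frac{a^{2}}{2n^{2}}$ and $q_n(a)=\frac{n!\,n!}{(n-a)!(n+a)!}$, the estimate $R_{12}+I_{3}\le-a^{4}/(48n^{3})$ established there for $n\ge4$, $a\in\{1,\dots,n-1\}$, is exactly
\[
\log\!\big(e^{a^{2}/n}q_{n}(a)\big)\;\le\;\frac{a^{2}}{2n^{2}}-\frac{a^{4}}{48n^{3}} ;
\]
maximizing the right-hand side over $a$ (put $u=(a/n)^{2}\in[0,1]$) gives $\log(e^{a^{2}/n}q_{n}(a))\le 3/n$ for $n\ge12$, hence $e^{a^{2}/n}q_{n}(a)\le e^{3/n}<e^{1/4}<e/2$, while the boundary column $a=n$ is handled directly from $e^{n}q_{n}(n)=e^{n}/\binom{2n}{n}\le 2\sqrt{n}\,(e/4)^{n}$, which equals $e/2$ at $n=1$ (the sharp case) and is $<e/2$ for every $n\ge2$. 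Retaining the further negative terms in $R_{12}$ and the strict negativity of $I_{3}$ lowers the constant, giving $e^{a^{2}/n}q_{n}(a)\le e^{c/n}$ with $c$ small enough that $e^{c/n}\le1.084315$ once $n$ exceeds an explicit small threshold $n_{1}$; one then verifies $M(n)\le e/2$, and $M(n)\le1.084315$ for $5\le n\le n_{1}$, on the remaining finite list of small $n$ by direct evaluation of the binomial ratios (these decrease comfortably; e.g.\ $M(5)\doteq1.0803$).

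The real content lies entirely in (a)/(b), and within it two small points need care: choosing the constant $c$ in the Stirling bound small enough that the list of small $n$ to be checked by hand stays short — routine bookkeeping using the estimates already in the proof of Theorem~\ref{BasicGW-TwoSampleKS-bounds} — and the column $a=n$, where that Stirling chain degenerates because $(n-a)!=0!$ and so a separate direct estimate is required; it is precisely this column, at $n=1$, that produces the sharp constant $e/2$ in part~(a).
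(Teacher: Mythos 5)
Your proposal is correct, and while parts (c) and (d) coincide with the paper's one-line derivations from Theorem~\ref{BasicGW-TwoSampleKS-bounds} (your explicit witness $q_n(1)=n/(n+1)>e^{-1/n}$ even gives (c) without invoking part B of that theorem), your treatment of (a) and (b) reaches the conclusion by a genuinely different endgame built on the same Stirling estimates. The paper splits the target into $K_1+K_2$ with $K_2=\frac{a^2}{2n^2}-\log(e/2)$, which only works for $a\le\sqrt{3n}$, and then covers $\sqrt{3n}\le a\le n$ by appealing to Wei and Dudley's bound $\binom{2n}{n+a}/\binom{2n}{n}<e^{-a^2/n}$, valid only for $n\ge108$; the price is a numerical verification over all $n\in\{1,\dots,107\}$. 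You instead keep the full estimate $\log\bigl(e^{a^2/n}q_n(a)\bigr)\le\frac{a^2}{2n^2}-\frac{a^4}{48n^3}$ — which is indeed exactly what $R_{12}+I_3\le-\frac{a^4}{48n^3}$ yields for $n\ge4$, $1\le a\le n-1$ — and maximize it over $a$: with $u=(a/n)^2$ the maximum of $\frac{u}{2}-\frac{nu^2}{48}$ equals $3/n$ once $n\ge12$, giving $e^{a^2/n}q_n(a)\le e^{1/4}<e/2$ uniformly, with the boundary column $a=n$ controlled by $\binom{2n}{n}\ge 4^n/(2\sqrt n)$ (and it is that column at $n=1$ that realizes the sharp constant $e/2$). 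This buys a self-contained proof, independent of the Wei--Dudley large-deviation result, and shrinks the numerical check to $n\le11$ for part (a). For part (b) you leave the constant $c$ slightly vague, but there is no gap: even the crude $c=3$ gives $e^{3/n}\le e^{3/38}\doteq1.0821<1.084315$ for $n\ge38$, so checking $5\le n\le37$ suffices (retaining the $-a^4/(21n^3)$ term of $R_{12}$ lowers the threshold to about $n\ge12$), and your computation $M(4)=2e/5\doteq1.0873$ correctly identifies why (b) must exclude $n=4$. Both proofs ultimately rely on a finite numerical verification; yours requires a substantially smaller one.
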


\begin{proof}[\bf Proof]
(c) follows from Theorem~\ref{BasicGW-TwoSampleKS-bounds}-B.  
(d) follows from Theorem~\ref{BasicGW-TwoSampleKS-bounds}-A.
It remains only to prove (a) and (b).

To prove (a), we first note that \cite{MR2887482} showed that for $n\ge108$ we have
\begin{eqnarray*}
\frac{{2n \choose n+a}}{ {2n \choose n}} 
& < & \exp ( - a^2/n) \ \ \ \mbox{for} \ \ \sqrt{3n} \le a \le n \\
& < & (e/2) \exp ( - a^2/n) .
\end{eqnarray*}
Thus to prove that the claimed inequality holds for $n\ge 108$, it suffices to show that 
it holds for $ t_0 \sqrt{n} \le a \le \sqrt{3} \sqrt{n}$ where $t_0 \equiv \sqrt{(1/2)\log(e/2)}$ is 
the smallest value of $t$ for which the bound is less than or equal to $1$.

Proceeding as in the proof of Theorem~\ref{BasicGW-TwoSampleKS-bounds}-A, we find that we want to show that
\begin{eqnarray*}
\log \frac{n! n!}{(n+a)! (n-a)!} + \frac{a^2}{n} - \log (e/2)  < 0 \ \ \mbox{for} \ \ t_0 \sqrt{n}\le a \le \sqrt{3} \sqrt{n} .
\end{eqnarray*}
By the same arguments used in the proof of Theorem~\ref{BasicGW-TwoSampleKS-bounds}-A, 
we find that the left side in the last display is bounded above by 
\begin{eqnarray*}
&& - \frac{a^4}{6n^3} - \frac{a^6}{15 n^5} - \frac{a^8}{28n^7} + \frac{a^4}{4 n^4} + \frac{a^6}{6 n^6 (1 - a^2/n^2)} + I_3\\
&& \qquad + \ \frac{a^2}{2n^2} - \log (e/2) \\
&& \ \ \equiv  K_1 + K_2 .
\end{eqnarray*}
Now $K_1 \le 0$ for $n \ge 4$ and $a \in \{ 1, \ldots , n-1 \}$ by the previous proof, and 
\begin{eqnarray*}
K_2 \equiv \frac{a^2}{2 n^2} - \log (e/2) < 0 \ \ \mbox{for all} \ \ a \le \sqrt{3}\sqrt{n} 
\end{eqnarray*}
if
$$
\frac{3}{2n} < \log (e/2) , \ \ \ \mbox{or} \ \ \ n > \frac{3}{2 \log (e/2)} \dot= 4.888\ldots .
$$
This completes the proof for $n \ge 108$.  Numerical computation easily shows that the claim holds 
for all $n \in \{ 1, \ldots , 107 \}$.

The proof of (b) is similar upon replacing $e/2$ by $1.084315$, 
and again computing numerically for $n \in \{ 1, \ldots , 107 \}$. 
\end{proof}

\begin{cor} 
For $n\ge 5$ and $C = 1.084315$,
\begin{eqnarray*}
P(D_{n,n}^+ \ge t) 
& \le & \min \left \{ \exp \left ( -2 \left (1 - 1/N \right ) t^2 \right ) , \ C \exp (- 2 t^2 ) \right \} \\
& = & \left \{ \begin{array}{l l} C \exp(- 2t^2), & t \ge t_0 \equiv \sqrt{n \log C} \dot= .285 \sqrt{n} , \\
                                               \exp ( - 2 (1-1/N)t^2) , & t \le t_0 \equiv \sqrt{n \log C} .
                    \end{array} \right .
\end{eqnarray*}
\end{cor}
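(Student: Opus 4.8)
The plan is to combine the two exponential bounds for $P(D_{n,n}^+\ge t)$ that have already been proved and then determine, for each $t>0$, which of the two is smaller. From Theorem~\ref{BasicGW-TwoSampleKS-bounds}-A, valid for every $n\ge1$ and in particular for $n\ge5$, we have
\begin{eqnarray*}
P(D_{n,n}^+\ge t)\le \exp\left(-2\,\frac{N-1}{N}\,t^2\right) = \exp\left(-2\left(1-\frac1N\right)t^2\right), \qquad t>0,
\end{eqnarray*}
while from Theorem~\ref{thm:OneSideTwoSampleKS-bound}-(b), valid for $n\ge5$, we have
\begin{eqnarray*}
P(D_{n,n}^+\ge t)\le C\exp(-2t^2), \qquad t>0,
\end{eqnarray*}
with $C=1.084315$. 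Since both inequalities hold simultaneously, so does the inequality whose right-hand side is the minimum of the two; this is the first line of the corollary.

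It remains only to evaluate that minimum explicitly. Since $m=n$, we have $N=2n$, and I would compare the two exponents by taking logarithms: the inequality $C\exp(-2t^2)\le\exp(-2(1-1/N)t^2)$ is equivalent to $\log C\le 2t^2/N=t^2/n$, i.e.\ to $t\ge t_0$ where $t_0\equiv\sqrt{n\log C}$, with equality holding precisely at $t=t_0$; the reverse inequality holds for $0<t\le t_0$. Thus $C\exp(-2t^2)$ is the smaller bound for $t\ge t_0$ and $\exp(-2(1-1/N)t^2)$ is the smaller bound for $t\le t_0$, which yields the stated piecewise formula. Numerically, $\sqrt{\log C}=\sqrt{\log 1.084315}\dot=0.285$, so $t_0\dot=0.285\sqrt{n}$, as claimed.

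There is essentially no obstacle here: the substantive work was carried out in Theorems~\ref{BasicGW-TwoSampleKS-bounds} and~\ref{thm:OneSideTwoSampleKS-bound}, and all that is left is the one-line logarithmic comparison that locates the crossover point $t_0$. The only thing to watch is bookkeeping --- using the constant $C=1.084315$ from part (b) (hence the restriction $n\ge5$) rather than the constant $e/2$ from part (a), and checking the arithmetic $\sqrt{\log C}\dot=0.285$ so that the displayed value of $t_0$ is correct.
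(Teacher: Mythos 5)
Your proposal is correct and matches what the paper intends: the corollary is stated without a separate proof precisely because it follows by intersecting the bound of Theorem~\ref{BasicGW-TwoSampleKS-bounds}-A with that of Theorem~\ref{thm:OneSideTwoSampleKS-bound}-(b) and locating the crossover $t_0=\sqrt{n\log C}$ via the logarithmic comparison you give. Your arithmetic for $t_0$ and the identification of which constant (and hence which range of $n$) applies are both right.
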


Figures~\ref{fig1} and~\ref{fig2} illustrate Theorem~\ref{thm:OneSideTwoSampleKS-bound}.

\begin{figure}[h]
  \begin{center}
    \includegraphics[keepaspectratio=true,scale=0.8]{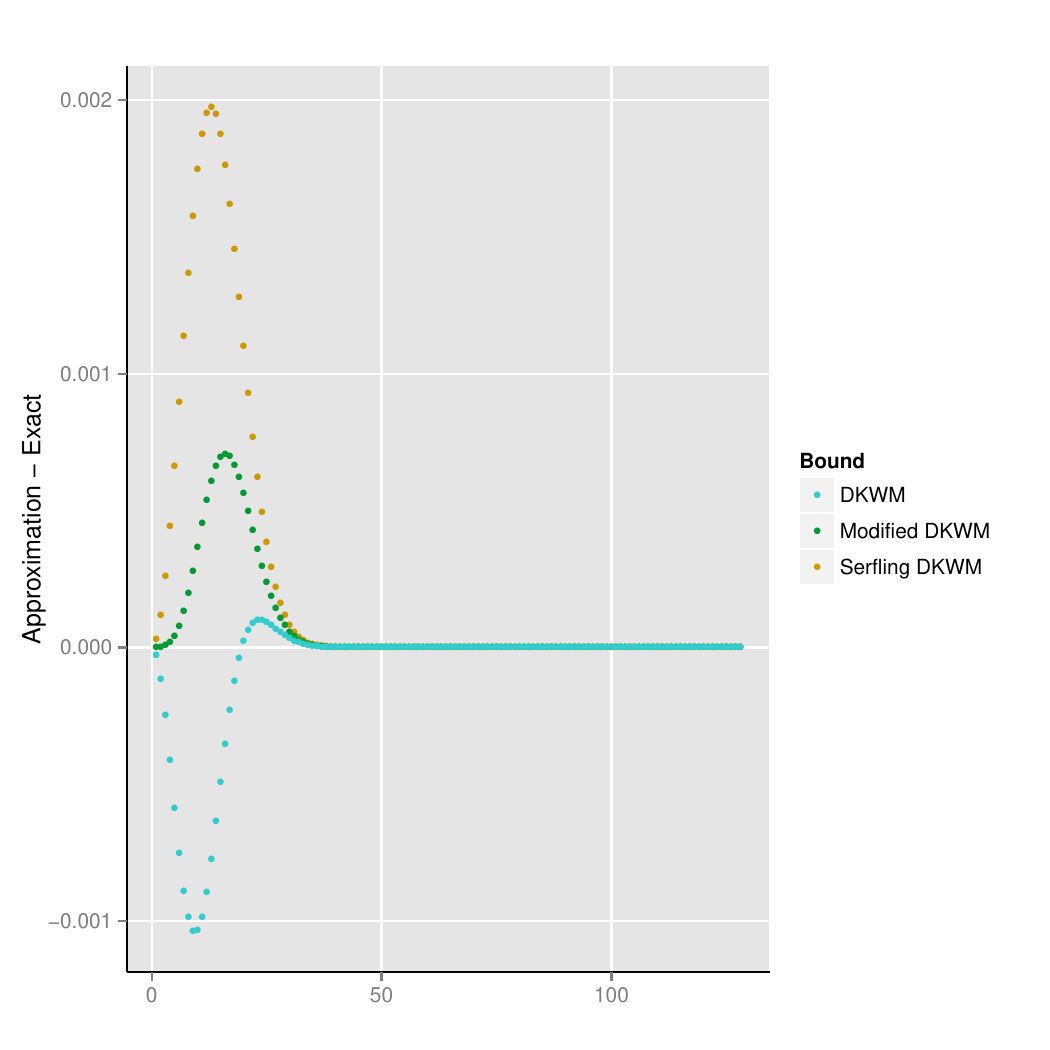}
  \end{center}
  \caption{Difference between approximations and exact one-sided probabilities $P\left(D_{n,n}^+ > t\right)$ 
    for $n=128$ and $a \in \left\{1,2,\dots,128\right\}$. Negative values indicate the exact probability exceeds the approximation.
           Serfling DKWM is the bound obtained via the heuristic of section 2, using the sampling fraction $1-f_n^* = (N-n+1)/N$. Modified
           DKWM uses the sampling fraction $1-f_n=(N-n)/(N-1)$. DKWM uses the fraction from Wei and Dudley.
  \label{fig1}}
\end{figure}

\begin{figure}[h]
  \begin{center}
    \includegraphics[keepaspectratio=true,scale=0.8]{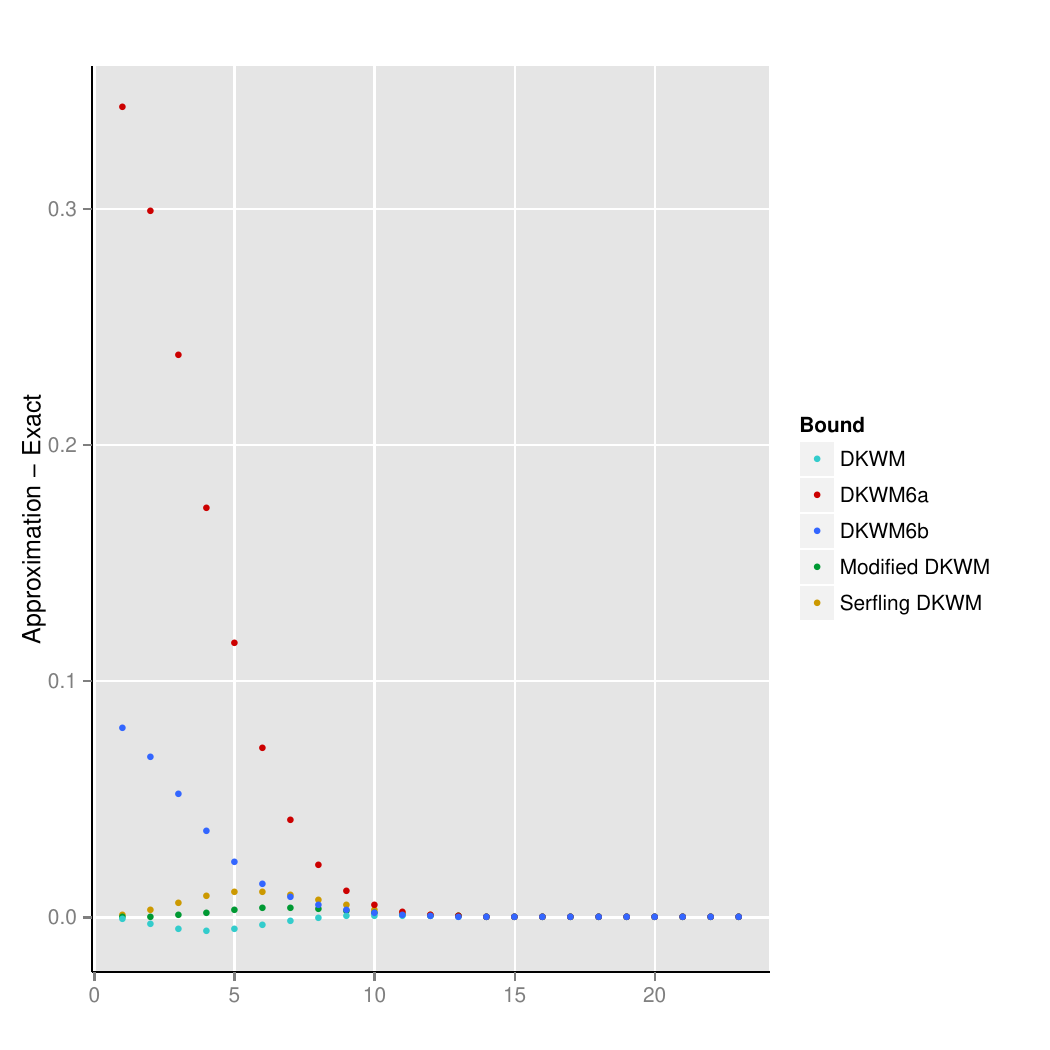}
  \end{center}
  \caption{Difference between approximations and exact one-sided probabilities $P\left(D_{n,n}^+ > t\right)$ 
    for $n=23$ and $a \in \left\{1,2,\dots,23\right\}$. Negative values indicate the exact probability exceeds the approximation.
    DKWM6a corresponds to the DKWM bound with the constant $e/2$, discussed in Theorem 6(a). 
    DKWM6b corresponds to the DKWM bound with the constant $2.16863/2$,
    discussed in Theorem 6(b).
   \label{fig2}}
\end{figure}

\section*{Acknowledgements}  
The second author owes thanks to Werner Ehm for several helpful conversations 
 and to Martin Wells for pointing out the Pitman reference.
 We also owe thanks to the referee for a number of helpful comments and suggestions.
 The improved inequality (and proof) in Theorem 3 part A is due to Lutz D\"umbgen.

\bigskip

\end{document}